\definecolor{darkred}{rgb}{0.6,0,0.1}
\definecolor{darkgreen}{rgb}{0.2,0.7,0.4}
\definecolor{darkblue}{rgb}{0.1,0.4375,0.8125}
\definecolor{lightblue}{rgb}{0.16015625,0.4453125,0.6328125}
 \def\@evenhead{\vbox{\hbox to \textwidth{\thepage\hfil\sl\leftmark\strut}\hrule}}
 \def\@oddhead{\vbox{\hbox to \textwidth{\rightmark\hfill\thepage\strut}\hrule}}
\def\cupt{\mathop{\textstyle\bigcup}}
\def\capt{\mathop{\textstyle\bigcap}}
\def\Sshv{\mathscr{S}(\Re^n)}
\def\Sshvf{\mathscr{S}'(\Re^n)}
\def\Cs#1{C_{#1}(\Re^n)}
\def\CDs#1#2{C_{#1}^{#2}(\Re^n)}
\def\Re{\mathbb R}
\def\Na{\mathbb N}
\def\Ze{\mathbb Z}
\def\Co{\mathbb C}
\def\izmer{\mathfrak{M}^n}
\def\Hasp{H^1(\Re^n)}
\def\Haspa#1{H^1_{atom}(\Re^n)}
\def\lOf#1{\Oplot{[#1]}}
\def\Maxfv{{\cal M}}
\def\BMO#1{\mathrm{BMO}(\Re^{#1})}
\def\VMO#1{\mathrm{VMO}(\Re^{#1})}
\def\Avg#1#2{\mathop{\rm Avg}_{#2}\left(#1\right)}
\def\funa{u}
\def\funb{\Upsilon}
\def\funf#1{\Opfunk_{#1}}
\def\Oplot#1{\ell_{#1}}
\def\Opfunk{J}
\def\Real{\mathrm{Re\,}}
\def\Imag{\mathrm{Im\,}}
\def\strong{\mathrm{s}}
\def\weak{\mathrm{w}}
\def\lebd#1{{\mathcal{L}^{#1}}}
\def\loc{\text{\rm loc}}
\def\esup{\mathop{\rm ess\,sup}}
\newcounter{Noi}
\newenvironment{listi}%
{\begin{list}{\hskip16pt\llap{{\rm (\roman{Noi})}}\hskip4pt}{\usecounter{Noi}%
          \setlength{\labelsep}{0pt}\setlength{\leftmargin}{20pt}%
          \setlength{\itemsep}{0pt}\setlength{\labelwidth}{0pt}%
           \setlength{\parsep}{0pt}\setlength{\listparindent}{20pt}}}{\end{list}}
\def\supp{\mathop{\rm supp}\nolimits}
\def\Lsp#1{L_{#1}(\Re^n)}
\def\Lsploc#1{L_{#1}^\loc(\Re^n)}
\newcounter{punktcount}[section]
\renewcommand{\thepunktcount}{\thesection.\arabic{punktcount}}
\def\punkt#1{\medskip\par\noindent\refstepcounter{punktcount}{\bf \thepunktcount.~#1}}
\newcounter{stepofproof}[punktcount]
\newcounter{vspcount}
\renewcommand{\thevspcount}{\roman{vspcount}}
\def\retrompage#1{\setcounter{vspcount}{#1}\thevspcount}
\def\rom#1{{\rm (\retrompage{#1})}}
\newenvironment{thm}{\punkt{Theorem.}\it}{}
\newenvironment{lemma}{\punkt{Lemma.}\it}{}
\newenvironment{corollary}{\punkt{Corollary.}\it}{}
\begin{document}

\begin{center}{\bf ON THE ASSOCIATED SPACES  OF THE  HARDY  SPACE}
 \end{center}

\vskip 0.3cm

\centerline{\bf D.V.~Prokhorov}        

\markboth{\hfill{\footnotesize\rm   D.V.~Prokhorov  }\hfill}
{\hfill{\footnotesize\sl  On the associated spaces of the  Hardy  space}\hfill}
\vskip 0.3cm

\vskip 0.7 cm

\noindent {\bf Key words:}  Hardy spaces, BMO, associated spaces 

\vskip 0.2cm

\noindent {\bf AMS Mathematics Subject Classification:} 42B30, 46E30

\vskip 0.2cm

\noindent {\bf Abstract.} Characterizations  of the  associated spaces and second associated spaces of the Hardy space on $\Re^n$ are given. Some results on the associated spaces of the $\BMO{n}$ space are proved also.

\section{\large Introduction}

 Let $\Re^n$ be the $n$-dimensional Euclidean space and $
 \Re^n_{+}:=\{(x_1,\ldots,x_n)\in\Re^n: x_n>0\}$. By cube we mean a closed cube in $\Re^n$ with edges parallel to the coordinate axes; the symbol $I(x,l)$ denotes a cube with center at $x$ and edge length $l$.
 
 By $\lebd{n}$ we denote the $n$-dimensional Lebesgue measure on $\Re^n$,  $\izmer$  is the vector space of all $\lebd{n}$-measurable functions $f:\Re^n\to\Co$.
 For $p\in[1,\infty]$ we put $p':=\frac{p}{p-1}$. The Lebesgue spaces $\Lsp{p}$ and $\Lsploc{p}$ are defined as usual 
 \begin{align*}
  &\Lsp{p}:=\left\{f\in\izmer\,\Big|\,\|f\|_{\Lsp{p}}<\infty\right\},\\  &\Lsploc{p}:=\left\{f\in\izmer\,\Big|\,\|f\chi_K\|_{\Lsp{p}}<\infty \ \forall\text{ compact }K\subset\Re^n\right\},
 \end{align*}
 where 
\begin{equation*}
  \|f\|_{\Lsp{p}}:=\begin{cases}
           \left(\int_{\Re^n}|f|\,d\lebd{n}\right)^\frac{1}{p}, & p\in [1,\infty),\\
           \lebd{n}\text{-}\esup_{x\in\Re^n}|f(x)|, & p=\infty.
           \end{cases}
 \end{equation*}

$\Sshv$ is  the  space of  rapidly decreasing functions $f:\Re^n\to\Co$, and  $\Sshvf$  is its conjugate space.
If $\funa\in \Sshvf$ and there exists $f\in\izmer$ such that $\funa$ has the representation   
\begin{equation}\label{funkfun}
\phi\mapsto \int_{\Re^n}f\phi\,d\lebd{n},\ \ \phi\in \Sshv, 
\end{equation}
then the function $f$ we denote by $\Oplot{\funa}$ and call ``density function'' of the functional $\funa$. 
If $Y\subset\Sshvf$ such that there exists $\Oplot\funa$ for each $\funa\in Y$, then we define $\lOf{Y}:=\{\Oplot\funa:\funa\in Y\}$.
If $f\in\izmer$ such that  \eqref{funkfun} gives a functional from $\Sshvf$, then the functional \eqref{funkfun} we denote by $\funf{f}$. 

Fix a function $\varphi\in \Sshv$ with $\kappa_\varphi:=\int_{\Re^n} \varphi\,d\lebd{n}\not=0$. 
 The  Hardy space on $\Re^n$ is the space
\begin{equation*}\Hasp:=\Big\{\funa\in \Sshvf\,\Big|\,\exists \Oplot{\funa}\in\Lsp{1},\,\Maxfv_\varphi \Oplot{\funa}\in \Lsp{1}\Big\},
\end{equation*}
where
\begin{equation*}(\Maxfv_\varphi f)(x):=\sup_{t>0}\frac{1}{(2\pi)^\frac{n}{2}t^n}\left|\int_{\Re^n}\varphi\left(\frac{x-y}{t}\right)f(y)\,dy\right|,\ \ f\in\Lsp{1},\ x\in\Re^n.
\end{equation*}
The norm on $\Hasp$  is defined by the equality  $\|\funa\|_{\Hasp}:=\|\Maxfv_\varphi \Oplot{\funa}\|_{\Lsp{1}}$.  
We also put $\|f\|_{\lOf{\Hasp}}:=\|\funf{f}\|_{\Hasp}$ for $f\in\lOf{\Hasp}$. The definition of the space $\Hasp$ does not depend on the choice of function  $\varphi$, and  the norms $\|\cdot\|_{\Hasp}$ are equivalent. The properties of the space $\Hasp$ are described in detail in monograph \cite[Chapter III, IV]{SteinHA}.
The properties that we will use are given in Theorems \ref{lodin}--\ref{sopryazh}.
  
\begin{thm} \label{lodin} 
\begin{listi}
 \item \label{ocenkalh}  If $\funa\in\Hasp$ then $\int_{\Re^n} \Oplot{\funa}\,d\lebd{n}=0$ and $\|\Oplot{\funa}\|_{\Lsp{1}}\le \frac{(2\pi)^{\frac{n}{2}}}{|\kappa_\varphi|}\|\funa\|_{\Hasp}$.
 \item \label{lphsone} Let $p\in (1,\infty]$, $f\in \Lsp{p}$, $\supp f\subset Q$ for some cube $Q$, $\int_{\Re^n} f\,d\lebd{n}=0$. Then $\funf{f}\in\Hasp$ and $\|\funf{f}\|_{\Hasp}\le c^*_1(n,p,\varphi)\lebd{n}(Q)^\frac{1}{p'}\|f\|_{\Lsp{p}}$.
\end{listi}
 \end{thm}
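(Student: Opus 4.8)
The plan is to reduce both assertions to pointwise estimates on
\[
g_t(x):=\frac{1}{(2\pi)^{n/2}t^n}\int_{\Re^n}\varphi\Big(\frac{x-y}{t}\Big)\Oplot{\funa}(y)\,dy,\qquad t>0,
\]
whose supremum over $t$ is exactly $(\Maxfv_\varphi\Oplot{\funa})(x)$. For the first assertion, Fubini together with the change of variables $z=(x-y)/t$ gives $\int_{\Re^n}g_t\,d\lebd{n}=\frac{\kappa_\varphi}{(2\pi)^{n/2}}\int_{\Re^n}\Oplot{\funa}\,d\lebd{n}$ for every $t$, a constant independent of $t$. Letting $t\to\infty$, for each fixed $x$ the argument $(x-y)/t$ tends to $0$, so $g_t(x)\to0$ by dominated convergence in $y$; since moreover $|g_t|\le\Maxfv_\varphi\Oplot{\funa}\in\Lsp1$, dominated convergence in $x$ gives $\int_{\Re^n}g_t\,d\lebd{n}\to0$, and comparing with the constant value above forces $\int_{\Re^n}\Oplot{\funa}\,d\lebd{n}=0$. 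For the norm bound I would instead let $t\to0$: the kernels $\frac{1}{(2\pi)^{n/2}t^n}\varphi(\cdot/t)$ form an approximate identity of total mass $\frac{\kappa_\varphi}{(2\pi)^{n/2}}$ with integrable radial majorant, so $g_t(x)\to\frac{\kappa_\varphi}{(2\pi)^{n/2}}\Oplot{\funa}(x)$ at every Lebesgue point. Hence $(\Maxfv_\varphi\Oplot{\funa})(x)\ge\frac{|\kappa_\varphi|}{(2\pi)^{n/2}}|\Oplot{\funa}(x)|$ almost everywhere, and integrating yields $\|\Oplot{\funa}\|_{\Lsp1}\le\frac{(2\pi)^{n/2}}{|\kappa_\varphi|}\|\funa\|_{\Hasp}$.

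For the second assertion, $f\in\Lsp p$ with compact support lies in $\Lsp1$, so $\funf{f}\in\Sshvf$ has density $f$ and $\|\funf{f}\|_{\Hasp}=\|\Maxfv_\varphi f\|_{\Lsp1}$ is what must be estimated. Writing $Q=I(x_0,l)$, I would split this integral over a dilate $Q^*=I(x_0,\lambda l)$, with $\lambda=\lambda(n)$ so large that $|x-y|\ge\frac12|x-x_0|$ whenever $x\notin Q^*$ and $y\in Q$, and over its complement. On $Q^*$ one uses that $\Maxfv_\varphi$ is dominated by a constant multiple of the Hardy--Littlewood maximal operator (because $\varphi\in\Sshv$ decays faster than any power), hence is bounded on $\Lsp p$ for $p>1$; H\"older's inequality then gives
\[
\int_{Q^*}\Maxfv_\varphi f\,d\lebd{n}\le\|\Maxfv_\varphi f\|_{\Lsp p}\,\lebd{n}(Q^*)^{1/p'}\le C(n,p,\varphi)\,\lebd{n}(Q)^{1/p'}\|f\|_{\Lsp p},
\]
the case $p=\infty$ being covered by the crude bound $\Maxfv_\varphi f\le C\|f\|_{\Lsp\infty}$.

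The decisive step, which I expect to be the main obstacle, is the tail over $\Re^n\setminus Q^*$, where the hypothesis $\int_{\Re^n}f\,d\lebd{n}=0$ must be exploited. There I would subtract the value $\varphi\big(\frac{x-x_0}{t}\big)$ and write
\[
g_t(x)=\frac{1}{(2\pi)^{n/2}t^n}\int_Q\Big[\varphi\Big(\tfrac{x-y}{t}\Big)-\varphi\Big(\tfrac{x-x_0}{t}\Big)\Big]f(y)\,dy,
\]
estimating the bracket by the mean value theorem and the Schwartz decay of $\nabla\varphi$ as $C_N\frac{|y-x_0|}{t}\big(1+\frac{|x-x_0|}{2t}\big)^{-N}$, and bounding $\int_Q|f|\le\lebd{n}(Q)^{1/p'}\|f\|_{\Lsp p}$. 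Taking the supremum of the resulting expression $\frac{l}{t^{n+1}}\big(1+\frac{|x-x_0|}{2t}\big)^{-N}$ over $t$ (splitting at $t=|x-x_0|$ and fixing any $N>n+1$) produces $(\Maxfv_\varphi f)(x)\le C_N\,l^{1+n/p'}|x-x_0|^{-(n+1)}\|f\|_{\Lsp p}$ off $Q^*$; integrating this in polar coordinates contributes a further factor $\sim l^{-1}$, so the tail is again at most $C\,\lebd{n}(Q)^{1/p'}\|f\|_{\Lsp p}$. Summing the two contributions gives $\|\funf{f}\|_{\Hasp}\le c^*_1(n,p,\varphi)\,\lebd{n}(Q)^{1/p'}\|f\|_{\Lsp p}$, as claimed.
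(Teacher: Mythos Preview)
The paper does not contain a proof of this theorem; it is stated there, along with the atomic decomposition and $\BMO{n}$ duality theorems, as a background fact about $\Hasp$ with a reference to Stein's monograph \cite[Chapters III, IV]{SteinHA}. Your argument is correct and is essentially the standard one found in that source: for (i), the two limits $t\to0$ and $t\to\infty$ of the convolutions $g_t$, combined with Fubini for $\int_{\Re^n} g_t\,d\lebd{n}$; for (ii), the near/far splitting using the $\Lsp{p}$-boundedness of the maximal operator on the dilate $Q^*$ and the cancellation-plus-Schwartz-decay estimate outside $Q^*$.

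One small phrasing point in (i): as $t\to\infty$ the integrand $\varphi\big((x-y)/t\big)$ does not tend to $0$ but to $\varphi(0)$; what makes $g_t(x)\to0$ is that $\int_{\Re^n}\varphi\big((x-y)/t\big)\Oplot{\funa}(y)\,dy$ remains bounded by $\|\varphi\|_{\Lsp{\infty}}\|\Oplot{\funa}\|_{\Lsp{1}}$ while the prefactor $t^{-n}$ vanishes. The rest of the argument is unaffected.
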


An important role in the theory of the  Hardy spaces is played the atomic decomposition  of an element of the space.  A function $a:\Re^n\to \Co$ is an atom (associated to a cube $Q$) if \rom{1} $a$ is supported in $Q$, \rom{2} $\|a\|_{\Lsp{\infty}}\le 1$, and \rom{3} $\int_{\Re^n} a\,d\lebd{n}=0$. Note that $\funf{a}\in \Hasp$ for any atom $a$ by \ref{lphsone}. And we will call the element $\funf{a}$ an atom too. The symbol $\Haspa{q}$ denotes the subspace of $\Hasp$ consisting of all finite linear combinations of atoms of $\funf{a}\in \Hasp$.

 \begin{thm}\label{thrazlatoms}
  Let 
  $\funa\in \Sshvf$. 
  \begin{listi}
   \item \label{LambdainH} If there exist a sequence $\{b_i\}_1^\infty$ of   atoms and a sequence  $\{\lambda_i\}_1^\infty\subset \Co$ such that  $\sum_{i=1}^\infty|\lambda_i|<\infty$ and
  $\sum_{i=1}^j\lambda_i \funf{b_i}\to \funa$ as $j\to\infty$
  in weak\,$^*$ topology of $\Sshvf$, then $\funa\in\Hasp$,  
  \begin{equation}\label{predelvHodin}
\lim_{j\to\infty}\left\|\sum_{i=1}^j\lambda_i \funf{b_i}-\funa\right\|_{\Hasp}=0   
  \end{equation}
  and
  \begin{equation}
   \|\funa\|_{\Hasp}\le c^*_1(n,\infty,\varphi)\sum_{i=1}^\infty |\lambda_i|.
  \end{equation}
   \item \label{sushrazlozh} If $\funa\in\Hasp$, then there exist a constant $c^*_2(n,\varphi)>0$, a sequence $\{b_i\}_1^\infty$ of atoms and a sequence $\{\lambda_i\}_1^\infty\subset \Co$ such that  \eqref{predelvHodin} holds and
  \begin{equation}
   \sum_{i=1}^\infty |\lambda_i|\le c^*_2(n,\varphi)\|\funa\|_{\Hasp}.
  \end{equation}
  \end{listi}
 \end{thm}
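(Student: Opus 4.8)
I would prove the two parts by different means: (\ref{LambdainH}) is a soft completeness argument, whereas (\ref{sushrazlozh}) requires an explicit Calderón--Zygmund decomposition and is where the real difficulty lies. For (\ref{LambdainH}), first apply (\ref{lphsone}) with $p=\infty$ to each atom $b_i$, which (by the defining bound on $\|b_i\|_{\Lsp{\infty}}$ and the mean-zero property) gives $\funf{b_i}\in\Hasp$ with $\|\funf{b_i}\|_{\Hasp}\le c^*_1(n,\infty,\varphi)$ uniformly in $i$. Writing $S_j:=\sum_{i=1}^j\lambda_i\funf{b_i}$, the triangle inequality yields $\|S_j-S_m\|_{\Hasp}\le c^*_1(n,\infty,\varphi)\sum_{i=m+1}^j|\lambda_i|$, so $(S_j)$ is Cauchy in $\Hasp$ because $\sum_i|\lambda_i|<\infty$; as $\Hasp$ is complete, $S_j\to v$ in $\Hasp$ for some $v$. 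By (\ref{ocenkalh}) the density map $w\mapsto\Oplot{w}$ is bounded and linear from $\Hasp$ into $\Lsp{1}$, hence $\Oplot{S_j}\to\Oplot{v}$ in $\Lsp{1}$, and therefore $S_j\to v$ in the weak\,$^*$ topology of $\Sshvf$. Since $S_j\to\funa$ weak\,$^*$ by hypothesis and weak\,$^*$ limits are unique, $\funa=v\in\Hasp$, which is \eqref{predelvHodin}; passing to the limit in $\|S_j\|_{\Hasp}\le c^*_1(n,\infty,\varphi)\sum_{i=1}^j|\lambda_i|$ gives the norm bound.

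For (\ref{sushrazlozh}), put $f:=\Oplot{\funa}\in\Lsp{1}$; by (\ref{ocenkalh}) it has mean zero, and $f^*:=\Maxfv_\varphi f$ satisfies $\|f^*\|_{\Lsp{1}}=\|\funa\|_{\Hasp}$. The idea is to run a Calderón--Zygmund decomposition at every dyadic height simultaneously. For $k\in\Ze$ set $\Omega_k:=\{x\in\Re^n:f^*(x)>2^k\}$; these sets are open, nested downward in $k$, and satisfy $\lebd{n}(\Omega_k)\le 2^{-k}\|f^*\|_{\Lsp{1}}$, so $\lebd{n}(\Omega_k)\to0$ as $k\to+\infty$. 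Choose for each $k$ a Whitney decomposition $\Omega_k=\bigcup_j Q^k_j$ into cubes of bounded overlap whose diameters are comparable to their distance from $\Re^n\setminus\Omega_k$, together with a subordinate partition of unity $\{\zeta^k_j\}$. Subtracting averages, form the bad pieces $b^k_j:=(f-c^k_j)\zeta^k_j$ with $\int_{\Re^n} b^k_j\,d\lebd{n}=0$ and the good part $g_k:=f-\sum_j b^k_j$; the Whitney geometry together with $f^*\le 2^k$ off $\Omega_k$ yields the pointwise control $|g_k|\lesssim 2^k$ a.e.

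From $|g_k|\lesssim 2^k$ one gets $g_k\to0$ as $k\to-\infty$, while $g_k\to f$ in $\Lsp{1}$ as $k\to+\infty$ because $\lebd{n}(\Omega_k)\to0$ and $f\in\Lsp{1}$; hence $f=\sum_{k\in\Ze}(g_{k+1}-g_k)$. Each difference $g_{k+1}-g_k$ is supported in $\Omega_k$ and regroups into a bounded-overlap sum $\sum_j A^k_j$ of functions, each supported on a fixed dilate of $Q^k_j$, of mean zero, and with $\|A^k_j\|_{\Lsp{\infty}}\lesssim 2^k$. Writing $A^k_j=\lambda^k_j a^k_j$ with $a^k_j$ an atom, the normalization forces $|\lambda^k_j|\lesssim 2^k\lebd{n}(Q^k_j)$, whence
\[
\sum_{k,j}|\lambda^k_j|\lesssim\sum_k 2^k\sum_j\lebd{n}(Q^k_j)\lesssim\sum_k 2^k\lebd{n}(\Omega_k)\lesssim\int_{\Re^n}f^*\,d\lebd{n}=\|\funa\|_{\Hasp},
\]
which produces the constant $c^*_2(n,\varphi)$. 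Relabelling the atoms $\{a^k_j\}$ as a single sequence $\{b_i\}$ with coefficients $\{\lambda_i\}$, the identity $f=\sum_i\lambda_i b_i$ in $\Lsp{1}$ gives $\sum_{i=1}^j\lambda_i\funf{b_i}\to\funf{f}=\funa$ weak\,$^*$, and then part (\ref{LambdainH}) upgrades this to convergence in $\Hasp$, i.e.\ \eqref{predelvHodin}.

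The genuinely hard step is (\ref{sushrazlozh}): proving the pointwise bound $|g_k|\lesssim 2^k$ and, above all, verifying that the telescoping difference $g_{k+1}-g_k$ splits into true atoms of height $\lesssim 2^k$ supported on the Whitney cubes while the overlap stays finite — the bookkeeping that merges the two families $\sum_j b^k_j$ and $\sum_j b^{k+1}_j$ into one atomic sequence is the delicate point. Once the atoms and the bound on $\sum_{k,j}|\lambda^k_j|$ are secured, part (\ref{LambdainH}) supplies the $\Hasp$-convergence and the coefficient estimate with no extra effort.
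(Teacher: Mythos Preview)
The paper does not give its own proof of this theorem: Theorems \ref{lodin}--\ref{sopryazh} are quoted in the introduction as background facts from \cite[Chapters III, IV]{SteinHA}, and \ref{thrazlatoms} in particular is the classical atomic decomposition theorem stated without argument. Your sketch is essentially the standard proof one finds in that reference (completeness of $\Hasp$ plus the uniform bound from \ref{lphsone} for part~(\ref{LambdainH}); the layered Calder\'on--Zygmund/Whitney construction of Latter--Stein for part~(\ref{sushrazlozh})), so there is nothing to compare --- you have reproduced the argument that the paper merely cites.
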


 The dual space for $\Hasp$ is described in terms of elements of the $\BMO{n}$ space, defined below.
  
 For $f\in\Lsploc{1}$ and  $\lebd{n}$-measurable $E\subset\Re^n$ we put
  $\Avg{f}{E}:=\frac{1}{\lebd{n}(E)}\int_E f\,d\lebd{n}$.
By definition
\begin{equation*}
 \|f\|_{\BMO{n}}:=\sup_{Q}\frac{1}{\lebd{n}(Q)}\int_Q|f-\Avg{f}{Q}|\,d\lebd{n},\ \ f\in\Lsploc{1},
 \end{equation*}
 where the supremum is taken over all cubes $Q$, and 
 \begin{equation*}
 \BMO{n}:=\Big\{f\in\Lsploc{1}\,\Big|\,\|f\|_{\BMO{n}}<\infty\Big\}.
 \end{equation*}
The symbol $\VMO{n}$ denotes
the closure in the $\BMO{n}$ norm of the space $\Cs{c}$ of continuous functions with
compact support.

\begin{thm} \label{sopryazh} 
\begin{listi}
 \item \label{pofbmofunk} For  $f\in\BMO{n}$ there exists unique $\funb\in(\Hasp)^*$ such that
 \begin{equation}\label{funknaatome}
\funb(\funf{a})=\int_{\Re^n} fa\,d\lebd{n}  
 \end{equation}
 holds for any atom $\funf{a}\in \Hasp$.
 And also, $\|\funb\|_{(\Hasp)^*}\le c^*_2(n,\varphi)\|f\|_{\BMO{n}}$.
\item  \label{predstfunkhardysp}
For  $\funb\in(\Hasp)^*$ there exists $f\in\BMO{n}$ such that \eqref{funknaatome} holds for any atom $\funf{a}\in \Hasp$. And also, $\|f\|_{\BMO{n}}\le 4c^*_1(n,2,\varphi) \|\funb\|_{(\Hasp)^*}$.
\end{listi}
\end{thm}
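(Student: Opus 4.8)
The plan is to prove the two halves by the standard atomic duality argument, reading Theorems \ref{lodin} and \ref{thrazlatoms} as the only inputs. For \ref{pofbmofunk} I construct the functional first on the dense subspace $\Haspa{q}$ of finite atomic combinations, where the pairing is a genuine integral, prove the operator bound there, and extend by continuity. For \ref{predstfunkhardysp} I run this in reverse: restrict a given $\funb$ to functions living on a fixed cube, represent it by an $\Lsp{2}$ density via the Riesz theorem, and glue these local densities into one $\BMO{n}$ function.

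For \ref{pofbmofunk}, fix $f\in\BMO{n}$. If $g\in\Haspa{q}$ then $\Oplot{g}$ is a bounded, compactly supported, mean-zero function, so $\funb_0(g):=\int_{\Re^n}f\,\Oplot{g}\,d\lebd{n}$ is a finite integral depending only on $g$; it is linear and, taking $g=\funf{a}$ (so $\Oplot{\funf{a}}=a$), reproduces \eqref{funknaatome}. The pointwise input to the bound is the single-atom estimate, which uses $\int a=0$:
\[
\Big|\int_{\Re^n}fa\,d\lebd{n}\Big|=\Big|\int_Q (f-\Avg{f}{Q})a\,d\lebd{n}\Big|\le \|a\|_{\Lsp{\infty}}\int_Q|f-\Avg{f}{Q}|\,d\lebd{n}\le \|a\|_{\Lsp{\infty}}\lebd{n}(Q)\|f\|_{\BMO{n}},
\]
which for an atom $a$ supported in $Q$ is controlled by $\|f\|_{\BMO{n}}$. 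Granting the key estimate $|\funb_0(g)|\le c^*_2(n,\varphi)\|f\|_{\BMO{n}}\|g\|_{\Hasp}$ on $\Haspa{q}$, and using that $\Haspa{q}$ is dense in $\Hasp$ by Theorem \ref{thrazlatoms}\ref{sushrazlozh}, I extend $\funb_0$ to a unique $\funb\in(\Hasp)^*$ with $\|\funb\|_{(\Hasp)^*}\le c^*_2(n,\varphi)\|f\|_{\BMO{n}}$. Uniqueness is immediate: two functionals agreeing on every atom agree on $\Haspa{q}$, hence on $\Hasp$.

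The main obstacle is the passage from the atomic estimate to the bound on $\funb_0$. For a finite representation $g=\sum_{i=1}^N\lambda_i\funf{a_i}$ the identity $\funb_0(g)=\sum_{i=1}^N\lambda_i\int fa_i$ is trivial and gives $|\funb_0(g)|\le\|f\|_{\BMO{n}}\sum_{i=1}^N|\lambda_i|$; the difficulty is that one must produce a representation with $\sum_i|\lambda_i|$ comparable to $\|g\|_{\Hasp}$. Theorem \ref{thrazlatoms}\ref{sushrazlozh} furnishes such a decomposition but a priori only an infinite one, and since $f\in\BMO{n}$ need not be bounded while $\Oplot{g}=\sum_i\lambda_i b_i$ converges only in $\Lsp{1}$, the interchange $\int f\,\Oplot{g}=\sum_i\lambda_i\int fb_i$ is not automatic and the far-off supports of the $b_i$ are not controlled by $\supp\Oplot{g}$. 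I would circumvent this by truncation: for the truncation $f_m\in\Lsp{\infty}$ of $f$ at level $m$ (real and imaginary parts separately), which satisfies $\|f_m\|_{\BMO{n}}\le C\|f\|_{\BMO{n}}$ uniformly in $m$, the interchange is legitimate and yields $|\int f_m\,\Oplot{g}|\le C\|f\|_{\BMO{n}}\sum_i|\lambda_i|$; since $\Oplot{g}$ is bounded with fixed compact support, dominated convergence gives $\int f_m\,\Oplot{g}\to\int f\,\Oplot{g}$, transferring the estimate to $f$. The sharp constant $c^*_2(n,\varphi)$ is then recovered by instead exhibiting a finite atomic decomposition of $g$ with comparable coefficient sum, which makes the interchange a finite one.

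For \ref{predstfunkhardysp}, fix $\funb\in(\Hasp)^*$ and, for a cube $Q$, let $E_Q$ be the mean-zero functions of $\Lsp{2}$ supported in $Q$. By Theorem \ref{lodin}\ref{lphsone} with $p=2$ the map $g\mapsto\funf{g}$ sends $E_Q$ into $\Hasp$ with $\|\funf{g}\|_{\Hasp}\le c^*_1(n,2,\varphi)\lebd{n}(Q)^{1/2}\|g\|_{\Lsp{2}}$, so $g\mapsto\funb(\funf{g})$ is a bounded functional on the Hilbert space $E_Q$; the Riesz theorem yields $f_Q\in E_Q$ with $\funb(\funf{g})=\int_Q f_Q\,g\,d\lebd{n}$ for all $g\in E_Q$. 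If $Q\subset Q'$ then $f_Q$ and $f_{Q'}$ induce the same functional on $E_Q$, so $f_{Q'}-f_Q$ is orthogonal to all mean-zero $\Lsp{2}(Q)$ functions, hence constant on $Q$; exhausting $\Re^n$ by an increasing sequence of cubes and adjusting these additive constants patches the $f_Q$ into a single $f\in\Lsploc{1}$ with $f-\Avg{f}{Q}=f_Q$ on each cube $Q$. Since any atom lies in some $E_Q$ and has $\int a=0$, the additive constant drops out and \eqref{funknaatome} follows. For the norm, on a cube $Q$ set $h:=\operatorname{sgn}(\overline{f-\Avg{f}{Q}})\chi_Q$ and $g:=(h-\Avg{h}{Q})\chi_Q\in E_Q$, so $\|g\|_{\Lsp{2}}\le 2\lebd{n}(Q)^{1/2}$ and $\int_Q(f-\Avg{f}{Q})g=\int_Q|f-\Avg{f}{Q}|$; then
\[
\int_Q|f-\Avg{f}{Q}|\,d\lebd{n}=\funb(\funf{g})\le c^*_1(n,2,\varphi)\,\lebd{n}(Q)^{1/2}\,\|g\|_{\Lsp{2}}\,\|\funb\|_{(\Hasp)^*}\le 2c^*_1(n,2,\varphi)\,\lebd{n}(Q)\,\|\funb\|_{(\Hasp)^*}.
\]
Dividing by $\lebd{n}(Q)$ and taking the supremum over cubes gives $\|f\|_{\BMO{n}}\le 2c^*_1(n,2,\varphi)\|\funb\|_{(\Hasp)^*}$, in particular the claimed bound with constant $4c^*_1(n,2,\varphi)$.
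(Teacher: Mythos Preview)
The paper does not prove Theorem~\ref{sopryazh}. It is listed in the introduction among ``the properties that we will use'' (Theorems~\ref{lodin}--\ref{sopryazh}), i.e.\ as the classical Fefferman--Stein duality $(\Hasp)^*\cong\BMO{n}$, quoted from \cite[Chapter~IV]{SteinHA} and then invoked as a black box in Section~2. So there is no ``paper's own proof'' to compare your proposal against.

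That said, your sketch is the standard argument and is essentially correct. Two comments. For \ref{pofbmofunk}, the truncation device you propose is exactly what the paper itself deploys later in the proof of Lemma~\ref{kritandocenka}; it yields the bound with an extra absolute factor (the truncation constant for $\BMO{n}$), and recovering the stated constant $c^*_2(n,\varphi)$ would indeed require a finite atomic decomposition with controlled coefficient sum, which is an additional (known) ingredient not supplied by Theorem~\ref{thrazlatoms} as stated. For \ref{predstfunkhardysp}, your Riesz-representation-and-patching argument is the textbook one and in fact delivers the constant $2c^*_1(n,2,\varphi)$, stronger than the $4c^*_1(n,2,\varphi)$ recorded in the statement.
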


\section{\large Characterization of the associated spaces}

An accurate theory of associated (K\"othe dual) spaces of the Banach Function Spaces  can be found in book \cite[Chapter 1]{BenSharp}. For non-ideal spaces, two types of associated spaces can be considered \cite{Pr2022-1,PSU-UMN}.

Let $X$ be the vector subspace of $\izmer$ and a topology  on $X$ be define with help of a  seminorm $p_X:X\to [0,\infty)$. We define the ``strong'' associated space by 
\begin{equation*}
 X'_\strong:=(X,p_X)'_\strong:=\left\{g\in \izmer\,\Big|\,\exists C_\strong(g)>0:\int_{\Re^n}|hg|\,d\lebd{n}\le C_\strong(g)\,p_X(h)\ \forall h\in X\right\}
\end{equation*}
and the ``weak'' associated space 
\begin{align*}
X'_\weak:=(X,p_X)'_\weak:=&\left\{g\in \izmer\,\Big|\,fg\in \Lsp{1}\,\forall f\in X\vphantom{\int_{\Re^n}}\right.\\
&\left.\&\ \exists C_\weak(g)>0:\left|\int_{\Re^n}hg\,d\lebd{n}\right|\le C_\weak(g)\,p_X(h)\ \forall h\in X\right\}, 
\end{align*}
which is  isomorphic to the subspace of the set  $X^*$ of all continuous functionals of the form $f\mapsto \int_{\Re^n} fg\,d\lebd{n}$, $f\in X$. Clear $X'_\strong\subset X'_\weak$. Also we put  $\|g\|_{X'_\strong}:=\inf C_\strong(g)$ for $g\in X'_\strong$ and $\|g\|_{X'_\weak}:=\inf C_\weak(g)$ for $g\in X'_\weak$.


Since each element of $\Hasp$ has density function it is possible to consider of associated spaces of $\Hasp$, namely we have 
\begin{align*}&(\Hasp)'_\strong:=\Bigg\{g\in\izmer\,\Bigg|\,\|g\|_{(\Hasp)'_\strong}:=\sup_{\funa\in\Hasp\setminus\{0\}}\frac{\int_{\Re^n}|g\Oplot\funa|\,d\lebd{n}}{\|\funa\|_{\Hasp}}<\infty\Bigg\},\\
&(\Hasp)'_\weak:=\Bigg\{g\in\izmer\,\Bigg|\,\int_{\Re^n}|g\Oplot\funa|\,d\lebd{n}<\infty\ \forall\,\funa\in\Hasp\ \&\\
&\phantom{\hskip 40mm}\|g\|_{(\Hasp)'_\weak}:=\sup_{\funa\in\Hasp\setminus\{0\}}\frac{\left|\int_{\Re^n}g\Oplot\funa\,d\lebd{n}\right|}{\|\funa\|_{\Hasp}}<\infty\Bigg\}.
\end{align*}
  
For ``strong'' associated space of $\Hasp$  we have the following result.
  
\begin{thm} \label{silnohardiacco} 
 There is equivalence $g\in(\Hasp)'_\strong$ $\Leftrightarrow$ $g\in\Lsp{\infty}$. Moreover,
 \begin{equation*}
  \|g\|_{(\Hasp)'_\strong}\le \frac{(2\pi)^{\frac{n}{2}}}{|\kappa_\varphi|}\|g\|_{\Lsp{\infty}},\ \ \ 
\ \|g\|_{\Lsp{\infty}}\le c^*_1(n,\infty,\varphi)\|g\|_{(\Hasp)'_\strong}.
 \end{equation*}
\end{thm}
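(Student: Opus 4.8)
The plan is to prove the two asserted norm inequalities separately; taken together they give the set-theoretic equivalence $g\in(\Hasp)'_\strong\Leftrightarrow g\in\Lsp{\infty}$. The direction $\Lsp{\infty}\Rightarrow(\Hasp)'_\strong$ is essentially immediate, while the reverse direction carries all the difficulty and is where the interesting construction enters.

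For the easy direction, suppose $g\in\Lsp{\infty}$. Given any $\funa\in\Hasp\setminus\{0\}$, I would estimate $\int_{\Re^n}|g\Oplot\funa|\,d\lebd{n}\le\|g\|_{\Lsp{\infty}}\|\Oplot\funa\|_{\Lsp{1}}$ by H\"older and then invoke Theorem~\ref{lodin}\,\ref{ocenkalh}, which controls the $\Lsp{1}$-mass of the density function by $\frac{(2\pi)^{n/2}}{|\kappa_\varphi|}\|\funa\|_{\Hasp}$. Dividing by $\|\funa\|_{\Hasp}$ and taking the supremum over $\funa$ yields finiteness of $\|g\|_{(\Hasp)'_\strong}$ together with the first inequality.

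For the reverse direction the main obstacle is the mean-zero constraint: the natural test objects of $\Hasp$ are atoms, which must integrate to zero, so one cannot simply pair $g$ against $\chi_Q$ to recover its supremum. I would circumvent this with antisymmetric atoms. Fix an arbitrary cube $Q$, bisect it by a hyperplane orthogonal to a coordinate axis into congruent sub-cubes $Q_1,Q_2$, and set $f:=\chi_{Q_1}-\chi_{Q_2}$. This $f$ is supported in $Q$, has mean zero, satisfies $\|f\|_{\Lsp{\infty}}=1$, and crucially $|f|=\chi_Q$ almost everywhere, so pairing against it loses no local mass. By Theorem~\ref{lodin}\,\ref{lphsone} with $p=\infty$ (hence $p'=1$) we get $\funf{f}\in\Hasp$ with $\|\funf{f}\|_{\Hasp}\le c^*_1(n,\infty,\varphi)\,\lebd{n}(Q)$, and since $\Oplot{\funf{f}}=f$ the definition of $(\Hasp)'_\strong$ gives
\[
\int_Q|g|\,d\lebd{n}=\int_{\Re^n}\big|g\,\Oplot{\funf{f}}\big|\,d\lebd{n}\le\|g\|_{(\Hasp)'_\strong}\|\funf{f}\|_{\Hasp}\le c^*_1(n,\infty,\varphi)\,\lebd{n}(Q)\,\|g\|_{(\Hasp)'_\strong}.
\]

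This bound simultaneously shows $g\in\Lsploc{1}$ and, after dividing by $\lebd{n}(Q)$, that the average of $|g|$ over every cube is at most $c^*_1(n,\infty,\varphi)\|g\|_{(\Hasp)'_\strong}$. To finish I would let the cubes shrink to a point and apply the Lebesgue differentiation theorem, concluding that $|g(x)|\le c^*_1(n,\infty,\varphi)\|g\|_{(\Hasp)'_\strong}$ for $\lebd{n}$-almost every $x$, which is exactly the second inequality and forces $g\in\Lsp{\infty}$. The only point requiring care is the choice of test atom: the identity $|f|=\chi_Q$ is what guarantees that the full local mass of $|g|$ is recovered and that no constant is wasted; the rest is routine.
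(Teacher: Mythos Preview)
Your proof is correct and follows essentially the same route as the paper. The paper's atom $a_Q=\frac{1}{\lebd{n}(Q)}(\chi_{Q\setminus E_Q}-\chi_{E_Q})$, where $E_Q$ is the half of $Q$ lying above the hyperplane $\{y_n=x_n\}$ through the center, is (up to the harmless normalization by $\lebd{n}(Q)^{-1}$) exactly your antisymmetric test function $f=\chi_{Q_1}-\chi_{Q_2}$; the key observation $|f|=\chi_Q$ and the subsequent appeal to Theorem~\ref{lodin}\ref{lphsone} and Lebesgue differentiation are identical. One cosmetic slip: bisecting a cube by a hyperplane yields two rectangular boxes, not ``sub-cubes'', but this has no bearing on the argument.
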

  
  \begin{proof}
   {\it Necessity.}  Let $g\in(\Hasp)'_\strong$.
   For an arbitrary cube $Q=I(x,l)$  we put
   \begin{equation}\label{delimkub}
   E_Q:=[(Q-x)\capt\Re^n_{+}]+x,\ \ a_Q:=\frac{1}{\lebd{n}(Q)}(\chi_{Q\setminus E_Q}-\chi_{E_Q}).
   \end{equation}
  Then $a_Q$ is an atom  associated to a cube $Q$. 
    Since $\funf{a_Q}\in\Hasp$ then we have $\int_{\Re^n}|ga_Q|\,d\lebd{n}<\infty$. Hence, $g\in\Lsploc{1}$.
   
 Let $x_0$ be a Lebesgue point of the function $|g|$. Then
 \begin{align*}
 |g(x_0)|&=\lim_{l\to 0+}\frac{1}{\lebd{n}(I(x_0,l))}\int_{I(x_0,l)}|g|\,d\lebd{n}=\lim_{l\to 0+}\int_{\Re^n}|ga_{I(x_0,l)}|\,d\lebd{n}\\
 &\le c^*_1(n,\infty,\varphi)\,\|g\|_{(\Hasp)'_\strong}. 
 \end{align*}
 Thus, $g\in\Lsp{\infty}$ and  $\|g\|_{\Lsp{\infty}}\le c^*_1(n,\infty,\varphi)\|g\|_{(\Hasp)'_\strong}$.
  
 {\it Sufficiency.} Let $g\in\Lsp{\infty}$. For an arbitrary $\funa\in\Hasp$ from \ref{ocenkalh} we have
 \begin{equation*}\int_{\Re^n}|g\Oplot\funa|\,d\lebd{n}\le \|g\|_{\Lsp{\infty}}\|\Oplot\funa\|_{\Lsp{1}}\le\frac{(2\pi)^{\frac{n}{2}}}{|\kappa_\varphi|}\|g\|_{\Lsp{\infty}}\|\funa\|_{\Hasp}.
 \end{equation*}
   \end{proof}

The following Lemma and Theorem characterize the ``weak'' associated space of $\Hasp$.

\begin{lemma} \label{kritandocenka} 
There is equivalence $g\in(\Hasp)'_\weak$ $\Leftrightarrow$  
\begin{equation}\label{ginYHasp}
g\in \left\{h\in\BMO{n}:\int_{\Re^n}|h\Oplot\funa|\,d\lebd{n}<\infty\ \forall\,\funa\in\Hasp\right\}.
\end{equation}
 And for $g\in (\Hasp)'_\weak$ there are the estimates
\begin{equation}\label{dvustoceka}
\frac{1}{4c^*_1(n,2,\varphi)}\|g\|_{\BMO{n}}\le \|g\|_{(\Hasp)'_\weak}\le c^*_2(n,\varphi)\|g\|_{\BMO{n}}.
\end{equation}
\end{lemma}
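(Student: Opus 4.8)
The plan is to read the $\Hasp$--$\BMO{n}$ duality of Theorem~\ref{sopryazh} once in each direction, after first noting that the two clauses in the definition of $(\Hasp)'_\weak$ assert exactly that the linear map $\Psi_g\colon\funa\mapsto\int_{\Re^n}g\,\Oplot\funa\,d\lebd{n}$ is a well-defined bounded functional on $\Hasp$ with $\|\Psi_g\|_{(\Hasp)^*}=\|g\|_{(\Hasp)'_\weak}$. So in each direction the real task is to match $\Psi_g$ against the $\BMO{n}$ representative furnished by duality, and the two estimates in \eqref{dvustoceka} will come out as the two norm bounds of Theorem~\ref{sopryazh}.

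For necessity, suppose $g\in(\Hasp)'_\weak$. First I would test against the atoms $\funf{a_Q}$ of \eqref{delimkub}: since $|a_Q|=\lebd{n}(Q)^{-1}\chi_Q$, the condition $\int|g\,a_Q|\,d\lebd{n}<\infty$ forces $\int_Q|g|\,d\lebd{n}<\infty$ for every cube, hence $g\in\Lsploc{1}$. As $\Psi_g\in(\Hasp)^*$, Theorem~\ref{sopryazh}\,\ref{predstfunkhardysp} gives $f\in\BMO{n}$ with $\Psi_g(\funf{a})=\int fa\,d\lebd{n}$ on atoms and $\|f\|_{\BMO{n}}\le 4c^*_1(n,2,\varphi)\|\Psi_g\|_{(\Hasp)^*}$. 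Since $\Oplot{\funf{a}}=a$, also $\Psi_g(\funf{a})=\int ga\,d\lebd{n}$, so $\int(f-g)a\,d\lebd{n}=0$ for every atom $a$. Testing with $\chi_A-\lebd{n}(A)\lebd{n}(Q)^{-1}\chi_Q$ over measurable $A\subset Q$ shows $f-g$ coincides with its average on each cube, hence is a single constant a.e.; therefore $g\in\BMO{n}$ with $\|g\|_{\BMO{n}}=\|f\|_{\BMO{n}}$. This places $g$ in the set \eqref{ginYHasp} and yields the left inequality of \eqref{dvustoceka}.

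For sufficiency, let $g\in\BMO{n}$ with $g\,\Oplot\funa\in\Lsp{1}$ for all $\funa$. Theorem~\ref{sopryazh}\,\ref{pofbmofunk} provides $\funb\in(\Hasp)^*$ with $\funb(\funf{a})=\int ga\,d\lebd{n}$ and $\|\funb\|_{(\Hasp)^*}\le c^*_2(n,\varphi)\|g\|_{\BMO{n}}$; everything reduces to the identity $\funb(\funa)=\int g\,\Oplot\funa\,d\lebd{n}$, which then gives $|\int g\Oplot\funa\,d\lebd{n}|\le c^*_2(n,\varphi)\|g\|_{\BMO{n}}\|\funa\|_{\Hasp}$ and hence both $g\in(\Hasp)'_\weak$ and the right inequality of \eqref{dvustoceka}. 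To prove the identity I would truncate $g$ at height $N$ on its real and imaginary parts, obtaining $g_N\in\Lsp{\infty}$ with $|g_N|\le|g|$, $g_N\to g$ pointwise, and $\|g_N\|_{\BMO{n}}\le C\|g\|_{\BMO{n}}$. For the bounded $g_N$ the functional $\funa\mapsto\int g_N\Oplot\funa\,d\lebd{n}$ is continuous by Theorem~\ref{lodin}\,\ref{ocenkalh} and agrees on atoms with the functional $\funb_N$ attached to $g_N$; since $\Haspa{q}$ is dense in $\Hasp$ by Theorem~\ref{thrazlatoms}, they coincide, so $\int g_N\Oplot\funa\,d\lebd{n}=\funb_N(\funa)$. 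Letting $N\to\infty$, the left side tends to $\int g\Oplot\funa\,d\lebd{n}$ by dominated convergence with majorant $|g\,\Oplot\funa|\in\Lsp{1}$, while on the right the uniform bound $\|\funb_N\|_{(\Hasp)^*}\le c^*_2(n,\varphi)\,C\|g\|_{\BMO{n}}$ together with $\funb_N(\funf{a})\to\funb(\funf{a})$ on atoms and the density of $\Haspa{q}$ forces $\funb_N(\funa)\to\funb(\funa)$ for every $\funa$. Equating the two limits gives the identity.

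The hard part will be exactly this identification $\funb(\funa)=\int g\,\Oplot\funa\,d\lebd{n}$. Because $g$ lies in $\BMO{n}$ but generally not in $\Lsp{\infty}$, the integral pairs a BMO function with an $\Lsp{1}$ density only conditionally, and without extra information it need not represent the duality functional at all; the integrability hypothesis $g\,\Oplot\funa\in\Lsp{1}$ built into \eqref{ginYHasp} is precisely what supplies the dominating function for the left-hand limit, while the uniform-boundedness-plus-density argument handles the right-hand one. I would emphasize that the truncation constant $C$ enters only these intermediate limits and drops out of the final estimate, so the sharp constant $c^*_2(n,\varphi)$ is preserved.
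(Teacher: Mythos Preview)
Your proof is correct and follows essentially the same route as the paper's: truncation of $g$ in $\BMO{n}$, the uniform $\BMO{n}$ bound on the truncations, dominated convergence against the majorant $|g\,\Oplot\funa|$, and Theorem~\ref{sopryazh} read once in each direction. You are in fact more explicit than the paper, which writes out only the sufficiency direction (obtaining first the preliminary constant $6c^*_2(n,\varphi)$) and then dispatches both necessity and the sharp constants with a single sentence citing Theorem~\ref{sopryazh}; your treatment of $f-g=\text{const}$ and of the identification $\funb(\funa)=\int_{\Re^n} g\,\Oplot\funa\,d\lebd{n}$ via density of $\Haspa{q}$ spells out exactly what the paper leaves implicit.
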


\begin{proof} Let \eqref{ginYHasp} be hold. Denote  $g^{(1)}:=\Real g$ and $g^{(2)}:=\Imag g$.  For $j\in\{1,2\}$, $k\in\Na$ and $\funa\in\Hasp$ we put 
\begin{equation*}
g_k^{(j)}(x):=\begin{cases}
      g^{(j)}(x), & |g^{(j)}(x)|<k,\\
      k, & g^{(j)}(x)>k,\\
      -k, & g^{(j)}(x)<-k; 
     \end{cases}\ \ \ \ \ 
\funb_k^{(j)}(\funa):=\int_{\Re^n}g_k^{(j)}\Oplot\funa\,d\lebd{n}. 
\end{equation*}
 Note that $g_k^{(j)}\in\BMO{n}$ and  $\|g_k^{(j)}\|_{\BMO{n}}\le 3\|g^{(j)}\|_{\BMO{n}}$.
Since $g_k^{(j)}\in\Lsp{\infty}$ then $\funb_k^{(j)}\in(\Hasp)^*$ and by Theorem  \ref{pofbmofunk}
\begin{equation*}
 |\funb_k^{(j)}(\funa)|\le c^*_2(n,\varphi)\|g_k^{(j)}\|_{\BMO{n}}\|\funa\|_{\Hasp}
\le 3c^*_2(n,\varphi)\|g^{(j)}\|_{\BMO{n}}\|\funa\|_{\Hasp}.
\end{equation*}
Further, $\int_{\Re^n}|g^{(j)}\Oplot\funa|\,d\lebd{n}\le\int_{\Re^n}|g\Oplot\funa|\,d\lebd{n}<\infty$ and by Lebesgue's dominated convergence theorem we obtain 
\begin{equation*}
 \left|\int_{\Re^n}g^{(j)}\Oplot\funa\,d\lebd{n}\right|\le  3c^*_2(n,\varphi)\|g^{(j)}\|_{\BMO{n}}\|\funa\|_{\Hasp}.
 \end{equation*}
 Hence,
\begin{equation*}
 \left|\int_{\Re^n}g\Oplot\funa\,d\lebd{n}\right|\le  6c^*_2(n,\varphi)\|g\|_{\BMO{n}}\|\funa\|_{\Hasp},
 \end{equation*}
 that is $g\in(\Hasp)'_\weak$. The functional $\funb:\Hasp\to\Co$, defined by formula
 \begin{equation*}
  \funb(\funa):=\int_{\Re^n}g\Oplot\funa\,d\lebd{n},\ \ \funa\in\Hasp,
 \end{equation*}
belongs $(\Hasp)^*$. Theorem \ref{sopryazh} implies the estimates \eqref{dvustoceka}.
\end{proof}

\begin{thm} 
$(\Hasp)'_\weak=(\Lsp{\infty},\|\cdot\|_{\BMO{n}})$.
\end{thm}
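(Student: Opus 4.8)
The plan is to split the claim into a set equality $(\Hasp)'_\weak=\Lsp{\infty}$ and an equivalence of norms, the latter being already available. Indeed, Lemma~\ref{kritandocenka} and its estimate \eqref{dvustoceka} show that on $(\Hasp)'_\weak$ the functional $\|\cdot\|_{(\Hasp)'_\weak}$ is equivalent to $\|\cdot\|_{\BMO{n}}$, so once the underlying sets are identified the asserted identification $(\Hasp)'_\weak=(\Lsp{\infty},\|\cdot\|_{\BMO{n}})$ follows at once. One set inclusion is painless: since $(\Hasp)'_\strong\subset(\Hasp)'_\weak$ in general and $(\Hasp)'_\strong=\Lsp{\infty}$ by Theorem~\ref{silnohardiacco}, we get $\Lsp{\infty}\subset(\Hasp)'_\weak$. (Directly, if $g\in\Lsp{\infty}$ then $\int_{\Re^n}|g\Oplot\funa|\,d\lebd{1}\le\|g\|_{\Lsp{\infty}}\|\Oplot\funa\|_{\Lsp{1}}<\infty$ by \ref{ocenkalh}, and the same bound controls the weak norm.)

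The heart of the statement is the reverse inclusion $(\Hasp)'_\weak\subset\Lsp{\infty}$: the a priori merely \emph{weak} integrability must force membership in the \emph{strong} associated space. I would extract this from the Closed Graph Theorem rather than from an explicit construction. Consider the linear map
\[
S\colon\Hasp\to\Lsp{1},\qquad S\funa:=g\,\Oplot\funa .
\]
By the very definition of $(\Hasp)'_\weak$ one has $g\,\Oplot\funa\in\Lsp{1}$ for every $\funa\in\Hasp$, so $S$ is everywhere defined, and it is linear because $\funa\mapsto\Oplot\funa$ is. Since $\Hasp$ and $\Lsp{1}$ are Banach spaces, it suffices to verify that $S$ has closed graph. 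So suppose $\funa_j\to\funa$ in $\Hasp$ and $S\funa_j\to F$ in $\Lsp{1}$. Applying \ref{ocenkalh} to $\funa_j-\funa$ gives $\|\Oplot{\funa_j}-\Oplot\funa\|_{\Lsp{1}}\le\frac{(2\pi)^{\frac{n}{2}}}{|\kappa_\varphi|}\|\funa_j-\funa\|_{\Hasp}\to0$, so $\Oplot{\funa_j}\to\Oplot\funa$ in $\Lsp{1}$; along a subsequence $\Oplot{\funa_j}\to\Oplot\funa$ a.e., whence $g\,\Oplot{\funa_j}\to g\,\Oplot\funa$ a.e. A further subsequence of $S\funa_j\to F$ converges a.e.\ to $F$, and comparing limits gives $F=g\,\Oplot\funa=S\funa$. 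Thus the graph is closed.

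The Closed Graph Theorem then makes $S$ bounded, i.e.\ there is $C>0$ with $\int_{\Re^n}|g\,\Oplot\funa|\,d\lebd{1}=\|S\funa\|_{\Lsp{1}}\le C\|\funa\|_{\Hasp}$ for all $\funa$; this is precisely $g\in(\Hasp)'_\strong$, so $g\in\Lsp{\infty}$ by Theorem~\ref{silnohardiacco}. Combined with the first inclusion and the norm equivalence from Lemma~\ref{kritandocenka}, this proves the theorem. The conceptual point I regard as the crux is exactly this collapse of the weak space onto the strong one: the purely qualitative hypothesis ``$g\,\Oplot\funa\in\Lsp{1}$ for \emph{every} $\funa$'' is automatically quantitative on a Banach space, the Closed Graph Theorem turning pointwise finiteness into a uniform bound. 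A more hands-on alternative would assume $g\in\BMO{n}\setminus\Lsp{\infty}$ and manufacture an offending element $\funa=\sum_k\lambda_k\funf{a_k}$ from atoms on pairwise disjoint cubes $Q_k$ with $|\Avg{g}{Q_k}|\to\infty$ (such cubes exist since $g\notin\Lsp{\infty}$ forces $\sup_Q|\Avg{g}{Q}|=\infty$), arranging $\sum_k|\lambda_k|<\infty$ while $\sum_k|\lambda_k|\,|\Avg{g}{Q_k}|=\infty$; there the main obstacle is the measure-theoretic bookkeeping needed to keep the cubes disjoint as their averages blow up — a difficulty the Closed Graph argument avoids altogether.
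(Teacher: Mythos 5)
Your argument is correct, but it establishes the crucial inclusion $(\Hasp)'_\weak\subset\Lsp{\infty}$ by a genuinely different mechanism than the paper. The paper argues by explicit construction: assuming $f\notin\Lsp{\infty}$, it selects Lebesgue points $x_{j_k}$ of $|f|$ with $|f(x_{j_k})|\ge k^2$, surrounds them by pairwise disjoint cubes $I_k$ on which the average of $|f|$ is at least $|f(x_{j_k})|/2$, and forms $h=\sum_k k^{-2}a_{I_k}$; Theorem \ref{LambdainH} puts $\funf{h}$ in $\Hasp$ while $\int_{\Re^n}|fh|\,d\lebd{n}=\infty$, contradicting the integrability condition. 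You instead apply the Closed Graph Theorem to $S\funa:=g\Oplot{\funa}$ from $\Hasp$ to $\Lsp{1}$, turning the purely qualitative hypothesis $g\Oplot{\funa}\in\Lsp{1}$ for all $\funa$ into the uniform bound defining $(\Hasp)'_\strong$, and then quote Theorem \ref{silnohardiacco}; your verification that the graph is closed (via \ref{ocenkalh} and a.e.\ convergent subsequences) is sound, and both routes in fact prove the same sharpened statement that the integrability condition alone forces $g\in\Lsp{\infty}$. What your version buys is brevity and the avoidance of the disjoint-cube bookkeeping that you correctly identify as the obstacle in the constructive route; what it costs is an appeal to the completeness of $\Hasp$, a classical fact but not one listed among the paper's quoted prerequisites in Theorems \ref{lodin}--\ref{sopryazh}, so you should either cite it or note that it follows from \ref{ocenkalh} together with the countable subadditivity of $\Maxfv_\varphi$ along absolutely convergent series. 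The remaining pieces --- the inclusion $\Lsp{\infty}=(\Hasp)'_\strong\subset(\Hasp)'_\weak$ and the norm equivalence supplied by \eqref{dvustoceka} in Lemma \ref{kritandocenka} --- are handled essentially as in the paper.
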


\begin{proof} By Lemma \ref{kritandocenka} it is enough to prove that 
if  $f\in\Lsploc{1}$ and $\int_{\Re^n}|f\Oplot\funa|\,d\lebd{n}<\infty$ for all $\funa\in\Hasp$, then $f\in\Lsp{\infty}$.

Assume that  $f\not\in\Lsp{\infty}$. We denote by the symbol $E_f$ the set of all  Lebesgue points of the function $|f|$. Since $f\in\Lsploc{1}$ then $\lebd{n}(\Re^n\setminus E_f)=0$. The relation   $f\not\in\Lsp{\infty}$ implies the existence of a countable set of points $\{x_j\}_1^\infty\subset E_f$ such that $|f(x_k)|\ge k^2$ for $k\in\Na$.  The set $\{x_j\}_1^\infty$ either has a condensation point or is unbounded. In both cases, there is a subsequence $\{x_{j_k}\}_{k=1}^\infty$ and a set of cubes $\{I_k\}_{k=1}^\infty$ with the properties: $x_{ j_k}$ is the center of the cube $I_k$, $I_k\capt I_{k'}=\emptyset$ for $k\not= k'$, $\frac{1}{\lebd{n}(I_k)}\int_ {I_k}|f|\,d\lebd{n}\ge \frac{|f(x_{j_k})|}{2}$.  Note that  $|f(x_{j_k})|\ge k^2$ for $k\in\Na$.

For $k\in\Na$ let $a_{I_k}$ be the function constructed in \eqref{delimkub} for the cube $Q:=I_k$.  We put $h(x):=\sum_{k=1}^\infty\frac{1}{k^2} a_{I_k}(x)$, $x\in\Re^n$.
 Then $h\in\Lsp{1}$,   for any $k\in\Na$ the function $a_{I_k}$ is an atom, and for $\phi\in\Sshv$, $m\in\Na$ we have
 \begin{align*}
\left|\int_{\Re^n}h\phi\,d\lebd{n}-\int_{\Re^n}\left[\sum_{k=1}^m \frac{1}{k^2} a_{I_k}\right]\phi\,d\lebd{n}\right|&\le \|\phi\|_{\Lsp{\infty}}\sum_{k={m+1}}^\infty\frac{1}{k^2}\int_{I_k}|a_{I_k}|\,d\lebd{n}\\
&\le\|\phi\|_{\Lsp{\infty}}\sum_{k={m+1}}^\infty\frac{1}{k^2}.
 \end{align*}
By using Theorem \ref{LambdainH}, we obtain $\funf{h}\in\Hasp$.

Besides that,
\begin{equation*}
 \int_{\Re^n}|fh|\,d\lebd{n}\ge \sum_{k=1}^\infty\int_{I_k}|fh|\,d\lebd{n}=\sum_{k=1}^\infty\frac{1}{k^2\lebd{n}(I_k)}\int_{I_k}|f|\,d\lebd{n}\ge \frac{1}{2}\sum_{k=1}^\infty 1=\infty,
\end{equation*}
and we get a contradiction.
\end{proof}

Next theorem describes  the second associated spaces.

\begin{thm} 
 \begin{listi}
  \item $g\in ((\Hasp)'_\weak)'_\strong$ $\Leftrightarrow$ $g=0$ $\lebd{n}$-a.e. on $\Re^n$.$\vphantom{\Big(}$
  \item $((\Hasp)'_\weak)'_\weak=\lOf{\Hasp}$.
 \end{listi}
\end{thm}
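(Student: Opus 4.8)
The plan is to prove the two characterizations of the second associated spaces separately, using the already-established identification $(\Hasp)'_\weak=(\Lsp{\infty},\|\cdot\|_{\BMO{n}})$ from the previous theorem. Throughout I write $X:=(\Hasp)'_\weak$ and recall that the seminorm on $X$ is the $\BMO{n}$ seminorm, which crucially kills constants: $\|c\|_{\BMO{n}}=0$ for every constant function $c$.

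\emph{Part (i).} I would show $g=0$ a.e.\ is forced by pairing $g$ against constants. Suppose $g\in X'_\strong$, so there is $C>0$ with $\int_{\Re^n}|hg|\,d\lebd{n}\le C\|h\|_{\BMO{n}}$ for all $h\in X$. Taking $h\equiv 1$ (which lies in $\Lsp{\infty}\subset X$ and has $\|1\|_{\BMO{n}}=0$) yields $\int_{\Re^n}|g|\,d\lebd{n}\le 0$, hence $g=0$ a.e. The converse is trivial. The only thing to check carefully is that the constant function is admissible, i.e.\ lies in the space $X$ and that the defining inequality of $X'_\strong$ applies to it; since the seminorm is genuinely degenerate, this is exactly where the strong associated space collapses.

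\emph{Part (ii).} The inclusion $\lOf{\Hasp}\subset X'_\weak$ is the heart of the duality: given $\funa\in\Hasp$ with density $\Oplot\funa$, I must produce $C>0$ so that $\bigl|\int_{\Re^n}(\Oplot\funa)g\,d\lebd{n}\bigr|\le C\|g\|_{\BMO{n}}$ for all $g\in\Lsp{\infty}\subset X$, together with $g\,\Oplot\funa\in\Lsp{1}$ for all $g\in X$. The latter integrability is immediate from $\Oplot\funa\in\Lsp{1}$ (Theorem~\ref{ocenkalh}) when $g\in\Lsp{\infty}$, and extends to all $g\in X$ via the definition in \eqref{ginYHasp}, which builds in $\int_{\Re^n}|g\Oplot\funa|\,d\lebd{n}<\infty$. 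For the norm bound I would invoke Theorem~\ref{pofbmofunk}: the functional $\funb_g(\funa)=\int_{\Re^n}g\,\Oplot\funa\,d\lebd{n}$ agrees with the $\Hasp$-dual pairing induced by $g\in\BMO{n}$, so $|\funb_g(\funa)|\le c^*_2(n,\varphi)\|g\|_{\BMO{n}}\|\funa\|_{\Hasp}$; taking $C=c^*_2(n,\varphi)\|\funa\|_{\Hasp}$ gives membership in $X'_\weak$, and in fact $\|\Oplot\funa\|_{X'_\weak}\le c^*_2(n,\varphi)\|\funa\|_{\Hasp}$.

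For the reverse inclusion $X'_\weak\subset\lOf{\Hasp}$, I would take $g\in X'_\weak$ and show its density function (it has one, being in $\izmer$) defines an element of $\Hasp$. The natural route is to exhibit $g$ as giving a bounded functional on $\BMO{n}=(\Hasp)^*$ in the right predual sense: the estimate $|\int g h\,d\lebd{n}|\le C\|h\|_{\BMO{n}}$ for all $h\in\Lsp{\infty}$ should, by the atomic duality of Theorem~\ref{sopryazh} together with the atomic decomposition of Theorem~\ref{thrazlatoms}, identify $\funf{g}$ with an element of $\Hasp$ whose norm is controlled by $C$. \textbf{The main obstacle I anticipate} is this reverse direction: verifying that finite test-function pairings upgrade to a genuine $\Hasp$-membership requires matching the pairing of $g$ against atoms with the functional it induces, and then using the second part of the atomic decomposition to recover $\|\funf{g}\|_{\Hasp}\le c\,\|g\|_{X'_\weak}$. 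This is essentially the predual characterization $\Hasp=(\VMO{n})^*$ in disguise, and the delicate point is ensuring the bound obtained on $\Lsp{\infty}$ (or on atoms, which are in $\Lsp{\infty}$) suffices to control the full $\BMO{n}$-indexed supremum defining the $\Hasp$ norm via atomic decomposition.
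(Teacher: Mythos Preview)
Your approach is correct and essentially the same as the paper's: part (i) is identical (pair against the constant $1$), and for the reverse inclusion in part (ii) the paper likewise reduces to the predual identity $(\VMO{n})^*=\Hasp$, carried out explicitly via Hahn--Banach extension of $\Lambda f=\int fg$ from $\Cs{c}$ to $\VMO{n}$ and then the Coifman--Weiss representation. Two small points the paper makes explicit that you should add: before invoking the predual, reuse the constant-function trick from (i) to deduce $g\in\Lsp{1}$ and $\int_{\Re^n}g\,d\lebd{n}=0$, so that $\funf{g}$ is well defined and the identification $\tilde g=g$ a.e.\ goes through; and your forward inclusion $\lOf{\Hasp}\subset X'_\weak$ is actually tautological from the very definition of $\|\cdot\|_{(\Hasp)'_\weak}$, with the sharper bound $\|\Oplot\funa\|_{X'_\weak}\le\|\funa\|_{\Hasp}$, so no detour through Theorem~\ref{pofbmofunk} is needed.
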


\begin{proof} \rom{1}. For $g\in ((\Hasp)'_\weak)'_\strong$ the equality $\int_{\Re^n}|gf|\,d\lebd{n}=0$ is necessary for any $f\in(\Hasp)'_\weak$ with $\|f\|_{(\Hasp)'_\weak}=0$.
Since $\chi_{\Re^n}\in (\Hasp)'_\weak$ and $\|\chi_{\Re^n}\|_{(\Hasp)'_\weak}=0$ then $g=0$ $\lebd{n}$-a.e. on $\Re^n$.

\rom{2}. Let $g\in ((\Hasp)'_\weak)'_\weak$. By definition for any $f\in (\Hasp)'_\weak$ the inequality  $\int_{\Re^n}|gf|\,d\lebd{n}<\infty$ holds and  
\begin{equation*}
\left|\int_{\Re^n}gf\,d\lebd{n}\right|\le \|g\|_{((\Hasp)'_\weak)'_\weak}\|f\|_{(\Hasp)'_\weak}. 
\end{equation*}
Since $\chi_{\Re^n}\in (\Hasp)'_\weak$ and  $\|\chi_{\Re^n}\|_{(\Hasp)'_\weak}=0$ then $g\in\Lsp{1}$ and $\int_{\Re^n}g\,d\lebd{n}=0$.

Let $M:=c^*_2(n,\varphi)\|g\|_{((\Hasp)'_\weak)'_\weak}$. For  
 $f\in\Cs{c}$ we put
$\Lambda f:=\int_{\Re^n}fg\,d\lebd{n}$. Then the estimate $|\Lambda f|\le M\|f\|_{\BMO{n}}$ holds and by the Hahn\,--\,Banach theorem \cite[3.3]{RudinFA} there exists a linear extension  $\tilde\Lambda$ of $\Lambda$ on $\VMO{n}$ with saving the estimate $|\tilde\Lambda f|\le M\|f\|_{\BMO{n}}$ for $f\in\VMO{n}$. By \cite[Theorem $(4.1)$]{CoifmanWeiss} there exists $\tilde g\in\lOf{\Hasp}$ such that
 $\tilde\Lambda\phi=\int_{\Re^n}\phi\tilde g\,d\lebd{n}$ for $\phi\in\Cs{c}$ and
\begin{equation*}
\|\tilde g\|_{\lOf{\Hasp}}=\|\tilde\Lambda\|_{(\VMO{n})^*}\le M.
\end{equation*}
Hence, $\tilde\Lambda\phi=\Lambda\phi$ for any $\phi\in\Cs{c}$, that is $\tilde g= g$ $\lebd{n}$-a.e. on $\Re^n$.
It implies  $g\in\lOf{\Hasp}$ and $\|g\|_{\lOf{\Hasp}}\le c^*_2(n,\varphi)\|g\|_{((\Hasp)'_\weak)'_\weak}$.

Conversely, $\lOf{\Hasp}\subset ((\Hasp)'_\weak)'_\weak$,
since for any $\funa\in\Hasp$ 
\begin{equation*}
\int_{\Re^n}|f\Oplot\funa|\,d\lebd{n}<\infty\ \ \forall\,f\in (\Hasp)'_\weak
\end{equation*}
and
\begin{equation*}
\left|\int_{\Re^n}f\Oplot\funa\,d\lebd{n}\right|\le \|\funa\|_{\Hasp}\|f\|_{(\Hasp)'_\weak}\ \ \forall\,f\in (\Hasp)'_\weak
\end{equation*}
hold. Besides that, $\|\Oplot\funa\|_{((\Hasp)'_\weak)'_\weak}\le \|\funa\|_{\Hasp}$.
\end{proof}


Further we give a description of the associated spaces of the $\BMO{n}$.

\begin{thm} 
\begin{listi}
  \item \label{bmoshtrixto} $(\BMO{n})'_\weak\subsetneq \lOf{\Hasp}$ and for  $g\in (\BMO{n})'_\weak$ the inequality 
  \begin{equation*}
\|g\|_{\lOf{\Hasp}}\le 4c^*_1(n,2,\varphi)c^*_2(n,\varphi)\|g\|_{(\BMO{n})'_\weak}   
  \end{equation*}
  holds
  \item \label{vidbmoslab}
$g\in(\BMO{n})'_\weak$  $\Leftrightarrow$  
\begin{equation}\label{ginYBMOweak}
g\in \left\{h\in\lOf{\Hasp}\,\middle|\,\int_{\Re^n}|hf|\,d\lebd{n}<\infty\ \forall\,f\in\BMO{n}\right\}.
\end{equation}
 And for $g\in (\BMO{n})'_\weak$ there are the estimates
\begin{equation}\label{dvustocekabmo}
\frac{1}{4c^*_1(n,2,\varphi)c^*_2(n,\varphi)}\|g\|_{\lOf{\Hasp}}\le \|g\|_{(\BMO{n})'_\weak}\le 6c^*_2(n,\varphi)\|g\|_{\lOf{\Hasp}}.
\end{equation}
   \item \label{tobmoshtrix} Let
\begin{equation*}
Y:=\left\{h\in \cupt_{p\in(1,\infty]}\Lsp{p}\,\middle|\, \supp h\text{ compact in }\Re^n,\ \int_{\Re^n} h\,d\lebd{n}=0\right\}.
\end{equation*}
Then $Y\subsetneq (\BMO{n})'_\weak$ and the closure of $Y$ in the space $\lOf{\Hasp}$ is  $\lOf{\Hasp}$.
 \end{listi}
 \end{thm}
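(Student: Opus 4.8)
The plan is to deduce every assertion from the duality and the associated--space identities already at hand, transferring each $\BMO{n}$--statement into the matching $\Hasp$--statement. For the inclusion and norm bound in \ref{bmoshtrixto} I would use the identities $(\Hasp)'_\weak=(\Lsp{\infty},\|\cdot\|_{\BMO{n}})$ and $((\Hasp)'_\weak)'_\weak=\lOf{\Hasp}$ established above. Because $\Lsp{\infty}\subset\BMO{n}$ and, by the left inequality of \eqref{dvustoceka} in Lemma \ref{kritandocenka}, $\|h\|_{\BMO{n}}\le 4c^*_1(n,2,\varphi)\|h\|_{(\Hasp)'_\weak}$ on $\Lsp{\infty}$, each $g\in(\BMO{n})'_\weak$ satisfies, for every $h\in(\Hasp)'_\weak=\Lsp{\infty}$,
\[
\left|\int_{\Re^n}hg\,d\lebd{n}\right|\le\|g\|_{(\BMO{n})'_\weak}\|h\|_{\BMO{n}}\le 4c^*_1(n,2,\varphi)\|g\|_{(\BMO{n})'_\weak}\|h\|_{(\Hasp)'_\weak}.
\]
Thus $g\in((\Hasp)'_\weak)'_\weak=\lOf{\Hasp}$ with $\|g\|_{((\Hasp)'_\weak)'_\weak}\le 4c^*_1(n,2,\varphi)\|g\|_{(\BMO{n})'_\weak}$, and the bound on $\|g\|_{\lOf{\Hasp}}$ follows after inserting the estimate $\|g\|_{\lOf{\Hasp}}\le c^*_2(n,\varphi)\|g\|_{((\Hasp)'_\weak)'_\weak}$ from the theorem on second associated spaces.

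In \ref{vidbmoslab} the implication ``$\Rightarrow$'' is immediate: membership in $(\BMO{n})'_\weak$ forces $fg\in\Lsp{1}$ for all $f\in\BMO{n}$ by definition, and $g\in\lOf{\Hasp}$ by \ref{bmoshtrixto}. For ``$\Leftarrow$'' and the right estimate in \eqref{dvustocekabmo} I would rerun the truncation argument of Lemma \ref{kritandocenka} with $\BMO{n}$ and $\Hasp$ interchanged: write $g=\Oplot\funa$, split a given $h\in\BMO{n}$ into real and imaginary parts and truncate each at height $k$, obtaining $h_k^{(j)}\in\Lsp{\infty}\subset\BMO{n}$ with $\|h_k^{(j)}\|_{\BMO{n}}\le 3\|h\|_{\BMO{n}}$. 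Since $h_k^{(j)}\in\Lsp{\infty}$ and $\Oplot\funa\in\Lsp{1}$, the functional of \ref{pofbmofunk} attached to $h_k^{(j)}$ is evaluated on $\funa$ by $\int_{\Re^n}h_k^{(j)}\Oplot\funa\,d\lebd{n}$ (using the $\Lsp{1}$--convergence of the atomic partial sums of $\funa$, by \ref{sushrazlozh} and \ref{ocenkalh}) and is bounded by $3c^*_2(n,\varphi)\|h\|_{\BMO{n}}\|\funa\|_{\Hasp}$. The hypothesis $\int_{\Re^n}|h\Oplot\funa|\,d\lebd{n}<\infty$ furnishes an integrable majorant, so the dominated convergence theorem lets me send $k\to\infty$ and recombine the parts to get $\big|\int_{\Re^n}hg\,d\lebd{n}\big|\le 6c^*_2(n,\varphi)\|h\|_{\BMO{n}}\|g\|_{\lOf{\Hasp}}$; the left estimate in \eqref{dvustocekabmo} is the bound of \ref{bmoshtrixto} rewritten.

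The two strict inclusions carry the real content, and I expect the first to be the main obstacle. To witness $(\BMO{n})'_\weak\subsetneq\lOf{\Hasp}$ I would construct a density that pairs non-absolutely with $\log|x|\in\BMO{n}$. Choose pairwise disjoint unit cubes $Q_k$ centred at points $x_k$ with $|x_k|=e^{k^3}$, let $a_k$ be the atom attached to $Q_k$ by \eqref{delimkub}, and set $g:=\sum_{k=1}^\infty k^{-2}a_k$. As $\sum_k k^{-2}<\infty$ and the partial sums converge in the weak\,$^*$ topology of $\Sshvf$, \ref{LambdainH} gives $\funf{g}\in\Hasp$, hence $g\in\lOf{\Hasp}$. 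On the other hand $\int_{Q_k}|a_k|\,d\lebd{n}=1$ and $|\log|x||\asymp k^3$ on $Q_k$, so $\int_{\Re^n}|g|\,|\log|x||\,d\lebd{n}\asymp\sum_k k^{-2}k^3=\sum_k k=\infty$, and \ref{vidbmoslab} yields $g\notin(\BMO{n})'_\weak$. The only delicate point is the bookkeeping that makes $\sum_k k^{-2}\log|x_k|$ diverge while $\sum_k k^{-2}$ stays finite.

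For \ref{tobmoshtrix}, if $h\in Y$ then $h\in\Lsp{p}$ for some $p\in(1,\infty]$, with compact support $K$ and mean zero, so $\funf{h}\in\Hasp$ by \ref{lphsone}; and since $\BMO{n}\subset\Lsploc{p'}$, Hölder's inequality on $K$ gives $\int_{\Re^n}|hf|\,d\lebd{n}<\infty$ for every $f\in\BMO{n}$, whence $h\in(\BMO{n})'_\weak$ by \ref{vidbmoslab}. This inclusion is strict: any non-compactly-supported mean-zero Schwartz function belongs to $\lOf{\Hasp}$ (a standard fact) and satisfies $\int_{\Re^n}|gf|\,d\lebd{n}<\infty$ for all $f\in\BMO{n}$, because the rapid decay of $g$ dominates the at-most-logarithmic growth of $\BMO{n}$ functions; such a $g$ lies in $(\BMO{n})'_\weak\setminus Y$. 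Finally, density of $Y$ in $\lOf{\Hasp}$ is exactly the density of finite linear combinations of atoms in $\Hasp$: each such combination is bounded, compactly supported and of mean zero, hence lies in $Y$, while the partial sums of the decomposition \ref{sushrazlozh} converge to any prescribed $\funa$ in the $\Hasp$-norm, that is, in $\lOf{\Hasp}$.
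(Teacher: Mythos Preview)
Your argument is correct and follows essentially the same route as the paper for all the main inclusions and norm estimates: part \ref{bmoshtrixto} via the chain through $((\Hasp)'_\weak)'_\weak$, part \ref{vidbmoslab} via the truncation trick of Lemma \ref{kritandocenka}, and in part \ref{tobmoshtrix} the inclusion $Y\subset(\BMO{n})'_\weak$ via $\BMO{n}\subset\Lsploc{p'}$ and density via $\lOf{\Haspa{\infty}}\subset Y$.

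The only real differences concern the two \emph{strictness} witnesses. For $(\BMO{n})'_\weak\subsetneq\lOf{\Hasp}$ the paper simply cites the example in \cite[IV, 6.2]{SteinHA}, whereas you build an explicit $g=\sum_k k^{-2}a_{Q_k}$ with $|x_k|=e^{k^3}$ and test against $\log|x|$; your construction is self-contained and works. For $Y\subsetneq(\BMO{n})'_\weak$ the paper takes the concrete density $g(x)=[\chi_{\Re^n_{+}}-\chi_{\Re^n\setminus\Re^n_{+}}](1+|x|)^{-(n+1)}$, exhibits an explicit atomic decomposition over dyadic shells to place it in $\lOf{\Hasp}$, and invokes \cite[IV, 1.1.4]{SteinHA} (the estimate $\int_{\Re^n}|f(x)|(1+|x|)^{-(n+1)}\,dx<\infty$ for $f\in\BMO{n}$) for the integrability. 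Your Schwartz-function witness is a legitimate alternative; just note that the phrase ``at-most-logarithmic growth'' is informal---what you are really using is that a Schwartz $g$ satisfies $|g(x)|\le C(1+|x|)^{-(n+1)}$, so the same estimate from \cite[IV, 1.1.4]{SteinHA} applies.
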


\begin{proof} \rom{1}. For an arbitrary  $g\in (\BMO{n})'_\weak$ we have
\begin{align*}
 &\|g\|_{(\BMO{n})'_\weak}\ge \sup_{f\in \Lsp{\infty}:\|f\|_{\BMO{n}}\not=0}\frac{\left|\int_{\Re^n}g f\,d\lebd{n}\right|}{\|f\|_{\BMO{n}}}\\
 &\ge \sup_{f\in \Lsp{\infty}:\|f\|_{\BMO{n}}\not=0}\frac{\left|\int_{\Re^n}g f\,d\lebd{n}\right|}{4c^*_1(n,2,\varphi)\|f\|_{(\Hasp)'_\weak}}=\frac{1}{4c^*_1(n,2,\varphi)}\|g\|_{((\Hasp)'_\weak)'_\weak}\\
 &\ge \frac{1}{4c^*_1(n,2,\varphi)c^*_2(n,\varphi)}\|g\|_{\lOf{\Hasp}}.
\end{align*}
The example  \cite[IV, 6.2]{SteinHA} shows that $(\BMO{n})'_\weak\not=\lOf{\Hasp}$.

\rom{2}. Let \eqref{ginYBMOweak} be hold.  Fix an arbitrary $f\in\BMO{n}$. Approximating the function $f$ by functions from $\BMO{n}\capt\Lsp{\infty}$ as in the proof of Lemma \ref{kritandocenka}, we obtain the estimate 
\begin{equation*}
 \left|\int_{\Re^n}g f\,d\lebd{n}\right|\le  6c^*_2(n,\varphi)\|f\|_{\BMO{n}}\|g\|_{\lOf{\Hasp}},
 \end{equation*}
 that is $g\in (\BMO{n})'_\weak$ and $\|g\|_{(\BMO{n})'_\weak}\le 6c^*_2(n,\varphi)\|g\|_{\lOf{\Hasp}}$.
 
 \rom{3}. Let $p\in (1,\infty]$, $g\in\Lsp{p}$, $\supp g\subset Q$, for some cube $Q$, and $\int_{\Re^n} g\,d\lebd{n}=0$.  
 Fix an arbitrary $f\in\BMO{n}$. 
 Since (see \cite[IV, 1.3]{SteinHA}) $f\in \Lsploc{p'}$ then $\int_{\Re^n} |fg|\,d\lebd{n}<\infty$ and by \cite[IV, 1.3]{SteinHA}
\begin{align*}
 \left|\int_{\Re^n}fg\,d\lebd{n}\right|&=\left|\int_Q\Big(f-\Avg{f}{Q}\Big) g\,d\lebd{n}\right|\\
 &\le \left[\frac{1}{\lebd{n}(Q)}\int_Q\left|f-\Avg{f}{Q}\right|^{p'}\,d\lebd{n}\right]^\frac{1}{p'}\|g\|_{\Lsp{p}}\lebd{n}(Q)^\frac{1}{p'}\\
 &\le c(n,p)\|f\|_{\BMO{n}}\|g\|_{\Lsp{p}}\lebd{n}(Q)^\frac{1}{p'}. 
 \end{align*}
 It implies $g\in  (\BMO{n})'_\weak$.

 For  $k\in\Ze$ and $x\in\Re^n$ we put
 \begin{align*}
   &g(x):=\left[\chi_{\Re^n_{+}}(x)-\chi_{\Re^n\setminus\Re^n_{+}}(x)\right](1+|x|)^{-(n+1)},\ \ \lambda_k:=2^{n(k+2)}(1+2^k)^{-(n+1)},\\
   &a_k(x):=2^{-n(k+2)}(1+2^k)^{n+1}g(x)\chi_{\{y\in\Re^n:|y|\in [2^k,2^{k+1})\}}(x).
 \end{align*}
 Then $g\in\Lsp{1}$, each $a_k$ is an atom and $\sum_{k\in\Ze}|\lambda_k|<\infty$.
 Since $\sum_{j=k}^m \lambda_j \funf{a_j}\to \funf{g}$ as $m\to\infty$ and $k\to-\infty$   in weak\,$^*$ topology of $\Sshvf$, then $\funf{g}\in\Hasp$
by \ref{LambdainH}. Hence,  $g\in \lOf{\Hasp}$ and   $g\not\in Y$
   because $\supp g$ is not a compact in  $\Re^n$. However, by  \cite[IV, 1.1.4]{SteinHA} for any $f\in\BMO{n}$ the inequality $\int_{\Re^n} |fg|\,d\lebd{n}<\infty$ holds. And  \ref{vidbmoslab} implies $g\in (\BMO{n})'_\weak$.

 Since $\lOf{\Haspa{\infty}}\subset Y$ then the closure of $Y$ in the space $\lOf{\Hasp}$ is  $\lOf{\Hasp}$.
\end{proof}

 \begin{corollary}
  The space $(\BMO{n})'_\weak$ is not complete.
 \end{corollary}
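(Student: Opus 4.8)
The plan is to derive the failure of completeness from the three structural facts about $(\BMO{n})'_\weak$ supplied by the preceding theorem: it carries a norm equivalent to the one inherited from $\lOf{\Hasp}$, it is a \emph{proper} subspace of $\lOf{\Hasp}$, and it is \emph{dense} in $\lOf{\Hasp}$. The guiding principle is elementary: a normed space that sits inside a Banach space as a proper dense subspace, with an equivalent norm, can never be complete.

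First I would record that $\lOf{\Hasp}$, equipped with $\|f\|_{\lOf{\Hasp}}=\|\funf{f}\|_{\Hasp}$, is itself a Banach space. Indeed, every $\funa\in\Hasp$ possesses a density function $\Oplot\funa$, unique up to $\lebd{n}$-null sets, so the map $f\mapsto\funf{f}$ is an isometric isomorphism of $\lOf{\Hasp}$ onto the Banach space $\Hasp$, and completeness transfers across it. Next I would assemble the ingredients from the theorem just proved: by \ref{vidbmoslab} the two-sided estimate \eqref{dvustocekabmo} shows that on $(\BMO{n})'_\weak$ the norm $\|\cdot\|_{(\BMO{n})'_\weak}$ is equivalent to $\|\cdot\|_{\lOf{\Hasp}}$; by \ref{bmoshtrixto} the inclusion $(\BMO{n})'_\weak\subsetneq\lOf{\Hasp}$ is strict; and by \ref{tobmoshtrix} the subspace $Y$ satisfies $Y\subset(\BMO{n})'_\weak$ while its closure in $\|\cdot\|_{\lOf{\Hasp}}$ is all of $\lOf{\Hasp}$. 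By monotonicity of the closure, the intermediate space $(\BMO{n})'_\weak$ is therefore dense in $\lOf{\Hasp}$.

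The conclusion then follows by contradiction. Suppose $(\BMO{n})'_\weak$ were complete with respect to its own norm $\|\cdot\|_{(\BMO{n})'_\weak}$. By the equivalence \eqref{dvustocekabmo} it would be complete with respect to $\|\cdot\|_{\lOf{\Hasp}}$ as well, hence closed in the Banach space $\lOf{\Hasp}$. Being simultaneously dense, it would then coincide with $\lOf{\Hasp}$, contradicting the strict inclusion furnished by \ref{bmoshtrixto}. Hence $(\BMO{n})'_\weak$ is not complete.

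I do not expect a genuine obstacle here, since every input is already in hand; the only point requiring care is the transfer of completeness through the norm equivalence, where one must invoke \emph{both} inequalities in \eqref{dvustocekabmo}, so that a $\|\cdot\|_{(\BMO{n})'_\weak}$-Cauchy sequence is also $\|\cdot\|_{\lOf{\Hasp}}$-Cauchy and the two candidate limits necessarily agree. Beyond that, the argument is just the standard observation that a proper dense subspace of a Banach space is never complete in any equivalent norm.
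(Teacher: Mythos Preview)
Your argument is correct and is essentially the same as the paper's: both derive a contradiction from the strict inclusion \ref{bmoshtrixto}, the density coming from \ref{tobmoshtrix}, and the two-sided norm comparison \eqref{dvustocekabmo}. The only cosmetic difference is that the paper traces a single element $g\in\lOf{\Hasp}\setminus(\BMO{n})'_\weak$ through an explicit approximating sequence from $\lOf{\Haspa{\infty}}$, whereas you package the same steps into the general principle that a proper dense subspace of a Banach space cannot be complete in an equivalent norm.
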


 \begin{proof}
  Suppose that $(\BMO{n})'_\weak$ is complete. Fix an arbitrary $g\in \lOf{\Hasp}$. By Theorem \ref{sushrazlozh} there exists $\{g_k\}_1^\infty\subset \lOf{\Haspa{\infty}}$ with property $\|g_k-g\|_{\lOf{\Hasp}}\to 0$ as $k\to\infty$. Then  $\{g_k\}_1^\infty$ is a Cauchy sequence in $\lOf{\Hasp}$. By \ref{tobmoshtrix} $\{g_k\}_1^\infty$ is a Cauchy sequence in $(\BMO{n})'_\weak$ too. By assumption there exists $g_0\in (\BMO{n})'_\weak$ such that $\|g_k-g_0\|_{(\BMO{n})'_\weak}\to 0$ as $k\to\infty$. Applying \ref{bmoshtrixto}, we obtain  $\|g_k-g_0\|_{\lOf{\Hasp}}\to 0$ as $k\to\infty$, that is $g=g_0\in (\BMO{n})'_\weak$. We got a contradiction. 
 \end{proof}

 \begin{thm} 
 \begin{listi}
   \item $g\in (\BMO{n})'_\strong$ $\Leftrightarrow$ $g=0$ $\lebd{n}$-a.e. on $\Re^n$.$\vphantom{\Big(}$
  \item $((\BMO{n})'_\weak)'_\weak=\BMO{n}$.
  \item $((\BMO{n})'_\weak)'_\strong=\Lsp{\infty}$.$\vphantom{\Big(}$
 \end{listi}  
 \end{thm}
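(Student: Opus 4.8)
The plan is to treat the three items separately, and for \rom{2} and \rom{3} to exploit the structure of $Z:=(\BMO{n})'_\weak$ already obtained in the preceding theorem: by \ref{tobmoshtrix} the space $Z$ contains $Y$ and is therefore dense in $\lOf{\Hasp}$, on $Z$ the seminorm $\|\cdot\|_{(\BMO{n})'_\weak}$ is equivalent to $\|\cdot\|_{\lOf{\Hasp}}$ by the estimates \eqref{dvustocekabmo}, and every atom belongs to $Z$ since $\lOf{\Haspa{\infty}}\subset Y\subset Z$. Item \rom{1} is then immediate: testing the defining inequality of $(\BMO{n})'_\strong$ against $\chi_{\Re^n}\in\BMO{n}$, which has $\|\chi_{\Re^n}\|_{\BMO{n}}=0$, forces $\int_{\Re^n}|g|\,d\lebd{n}\le C_\strong(g)\cdot 0=0$, so $g=0$ $\lebd{n}$-a.e.; the converse is trivial. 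This repeats the argument used for \rom{1} of the second associated space of $\Hasp$.

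For \rom{3} I would follow Theorem \ref{silnohardiacco}. For sufficiency, given $g\in\Lsp{\infty}$ and $h\in Z\subset\lOf{\Hasp}$ I would estimate $\int_{\Re^n}|hg|\,d\lebd{n}\le\|g\|_{\Lsp{\infty}}\|h\|_{\Lsp{1}}$ and then pass from $\|h\|_{\Lsp{1}}$ to $\|h\|_{(\BMO{n})'_\weak}$ using \ref{ocenkalh} and the left inequality of \eqref{dvustocekabmo}. For necessity, given $g\in Z'_\strong$ I would feed in the atoms $a_Q$ from \eqref{delimkub}: they lie in $Z$ and, by \ref{lphsone} with $p=\infty$ together with the right inequality of \eqref{dvustocekabmo}, their $(\BMO{n})'_\weak$-seminorms are bounded by a constant independent of $Q$. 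Choosing $Q=I(x_0,l)$ at a Lebesgue point $x_0$ of $|g|$ and letting $l\to0+$ then yields $|g(x_0)|\le C\|g\|_{Z'_\strong}$, whence $g\in\Lsp{\infty}$.

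For \rom{2} I would prove the two inclusions. To get $\BMO{n}\subset Z'_\weak$, fix $g\in\BMO{n}$ and $f=\Oplot{\funa}\in Z$; the defining condition \eqref{ginYBMOweak} of $Z$ gives $fg\in\Lsp{1}$, and the truncation argument from the proof of Lemma \ref{kritandocenka} (truncate $g$, apply Theorem \ref{pofbmofunk}, pass to the limit by dominated convergence, which is legitimate because $\int_{\Re^n}|g\Oplot{\funa}|\,d\lebd{n}<\infty$) gives $|\int_{\Re^n}g\Oplot{\funa}\,d\lebd{n}|\le c\,\|g\|_{\BMO{n}}\|\funa\|_{\Hasp}$; the left inequality of \eqref{dvustocekabmo} turns $\|\funa\|_{\Hasp}=\|f\|_{\lOf{\Hasp}}$ into a multiple of $\|f\|_{(\BMO{n})'_\weak}$. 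For $Z'_\weak\subset\BMO{n}$, fix $g\in Z'_\weak$; testing against the atoms $a_Q$ shows $g\in\Lsploc{1}$, and the functional $f\mapsto\int_{\Re^n}fg\,d\lebd{n}$ is bounded on $Z$ in the $\lOf{\Hasp}$-norm (right inequality of \eqref{dvustocekabmo}). Since $Z$ is dense in $\lOf{\Hasp}$, this functional extends to a bounded functional on $\lOf{\Hasp}\cong\Hasp$, which by Theorem \ref{predstfunkhardysp} is represented by some $\tilde g\in\BMO{n}$. Comparing on atoms gives $\int_{\Re^n}a(g-\tilde g)\,d\lebd{n}=0$ for every atom $a$, and I would deduce $g=\tilde g+\mathrm{const}\in\BMO{n}$.

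The main obstacle is the inclusion $Z'_\weak\subset\BMO{n}$, and within it the final step. Concretely I must: secure $g\in\Lsploc{1}$ so that the pairing $\int_{\Re^n}a(g-\tilde g)\,d\lebd{n}$ is meaningful; run the extension of the functional cleanly through the isometry $\lOf{\Hasp}\cong\Hasp$ and the $\Hasp$-duality \ref{predstfunkhardysp}; and upgrade the relation $\int_{\Re^n}a(g-\tilde g)\,d\lebd{n}=0$ for all atoms $a$ to the statement that $g-\tilde g$ is $\lebd{n}$-a.e.\ equal to one fixed constant. The last point uses, cube by cube, that a locally integrable function annihilating every mean-zero $\Lsp{\infty}$ function supported in $Q$ must be a.e.\ constant on $Q$, followed by a consistency argument across an increasing family of cubes exhausting $\Re^n$; the constants drop out of the $\BMO{n}$-seminorm, giving the two-sided estimate that identifies $((\BMO{n})'_\weak)'_\weak$ with $\BMO{n}$.
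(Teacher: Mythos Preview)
Your proposal is correct and follows the same overall strategy as the paper for all three items; the only noteworthy differences occur in part \rom{2}, in the inclusion $Z'_\weak\subset\BMO{n}$. First, you extend the functional $f\mapsto\int_{\Re^n}fg\,d\lebd{n}$ from $Z$ to $\lOf{\Hasp}$ by density (via \ref{tobmoshtrix}), whereas the paper defines it on the smaller subspace $\lOf{\Haspa{\infty}}$ and invokes Hahn--Banach; since both subspaces are dense the two devices are equivalent here. Second, and more substantively, for the conclusion that $g-\tilde g$ is $\lebd{n}$-a.e.\ constant the paper takes a different route: it observes that $\{D^if\,:\,f\in\CDs{c}{1},\ i\in\{1,\ldots,n\}\}\subset\lOf{\Haspa{\infty}}$, so the identity $\int_{\Re^n}(g-\tilde g)D^if\,d\lebd{n}=0$ says that all weak first-order derivatives of $g-\tilde g$ vanish, and then cites \cite[1.1.11]{Mazya} to obtain the constant. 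Your cube-by-cube argument---a locally integrable function orthogonal to every mean-zero $\Lsp{\infty}$ function supported in $Q$ must equal $\Avg{g-\tilde g}{Q}$ a.e.\ on $Q$, and these constants agree on an exhausting nested family of cubes---is more elementary and self-contained, avoiding the external Sobolev-space reference.
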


 \begin{proof}   \rom{1}. For  $g\in (\BMO{n})'_\strong$ the equality $\int_{\Re^n}|gf|\,d\lebd{n}=0$ is necessary for any $f\in \BMO{n}$ with $\|f\|_{\BMO{n}}=0$. In case of $f=\chi_{\Re^n}$ we have $g=0$ $\lebd{n}$-a.e. on $\Re^n$.
  
  \rom{2}. Fix an arbitrary $g\in ((\BMO{n})'_\weak)'_\weak$. Let $M:=6c^*_2(n,\varphi)\|g\|_{((\BMO{n})'_\weak)'_\weak}$. For any $f\in \lOf{\Haspa{\infty}}\subset (\BMO{n})'_\weak$ we define 
  $\Lambda f:=\int_{\Re^n}fg\,d\lebd{n}$. Note that
  \begin{equation*}
|\Lambda f|\le \|g\|_{((\BMO{n})'_\weak)'_\weak}\|f\|_{(\BMO{n})'_\weak}\le  M\|f\|_{\lOf{\Hasp}}.
 \end{equation*}
 By the Hahn\,--\,Banach theorem \cite[3.3]{RudinFA} there exists a linear extension  $\tilde\Lambda$ of $\Lambda$ on $\lOf{\Hasp}$ with saving the estimate  $|\tilde\Lambda f|\le M\|f\|_{\lOf{\Hasp}}$, $f\in\lOf{\Hasp}$. By \ref{predstfunkhardysp} there exists $\tilde g\in\BMO{n}$ such that
$\tilde\Lambda f=\int_{\Re^n}f\tilde g\,d\lebd{n}$ for $f\in\lOf{\Haspa{\infty}}$ and the estimates
\begin{equation*}
\|\tilde g\|_{\BMO{n}}\le 4c^*_1(n,2,\varphi)\|\tilde\Lambda\|_{(\lOf{\Hasp})^*}\le 4c^*_1(n,2,\varphi)M.
\end{equation*}
hold. Then $\tilde\Lambda f=\Lambda f$ for $f\in\lOf{\Haspa{\infty}}$. Since $\Big\{D^if\,|\,f\in \CDs{c}{1},\,i\in\{1,\ldots,n\}\Big\}\subset\lOf{\Haspa{\infty}}$ there exists \cite[1.1.11]{Mazya} (all weak first order derivatives of the function $\tilde g- g$ are equal to zero) a constant $\lambda\in\Co$ such that $\tilde g-g=\lambda$ $\lebd{n}$-a.e. on $\Re^n$.
Hence,   $g\in\BMO{n}$ and 
\begin{equation*}
\|g\|_{\BMO{n}}\le 24c^*_2(n,\varphi)c^*_1(n,2,\varphi)\|g\|_{((\BMO{n})'_\weak)'_\weak}. 
\end{equation*}

Conversely, fix an arbitrary $g\in\BMO{n}$. For any $f\in (\BMO{n})'_\weak$  \ref{vidbmoslab} implies $\int_{\Re^n}|fg|\,d\lebd{n}<\infty$ and  
\begin{equation*}
 \left|\int_{\Re^n}fg\,d\lebd{n}\right|\le \|g\|_{\BMO{n}}\|f\|_{(\BMO{n})'_\weak},
\end{equation*}
that is $g\in ((\BMO{n})'_\weak)'_\weak$ and $\|g\|_{((\BMO{n})'_\weak)'_\weak}\le \|g\|_{\BMO{n}}$.

\rom{3}. Fix an arbitrary $g\in ((\BMO{n})'_\weak)'_\strong$. Then, similarly to the proof \ref{silnohardiacco} we have
\begin{equation*}
 \|g\|_{((\BMO{n})'_\weak)'_\strong}\ge \sup_{f\in \Haspa{\infty}\setminus\{0\}}\frac{\int_{\Re^n}|g f|\,d\lebd{n}}{6c^*_2(n,\varphi)\|f\|_{\lOf{\Hasp}}}=\frac{1}{6c^*_2(n,\varphi)}\|g\|_{\Lsp{\infty}}.
\end{equation*}

Conversely, fix an arbitrary  $g\in\Lsp{\infty}$. For any $f\in (\BMO{n})'_\weak\subset\lOf{\Hasp}\subset\Lsp{1}$   there are the estimates
\begin{equation*}
 \int_{\Re^n}|fg|\,d\lebd{n}\le \|g\|_{\Lsp{\infty}}\|f\|_{\Lsp{1}}\le \|g\|_{\Lsp{\infty}}\frac{(2\pi)^{\frac{n}{2}}4c^*_1(n,2,\varphi)c^*_2(n,\varphi)}{|\kappa_\varphi|}\|f\|_{(\BMO{n})'_\weak},
\end{equation*}
that is $g\in ((\BMO{n})'_\weak)'_\strong$.
 \end{proof}

\end{document}